\documentclass[12pt]{amsart}

\usepackage{amssymb, amscd, txfonts}
\usepackage{graphicx}
\usepackage[all]{xy} 
\usepackage{comment} 
 
\numberwithin{equation}{section}

\newtheorem{theorem}{Theorem}[section]
\newtheorem{proposition}[theorem]{Proposition}
\newtheorem{lemma}[theorem]{Lemma}

\theoremstyle{definition}

\theoremstyle{remark}

\renewcommand{\hom}{\operatorname{Hom}}
\renewcommand{\ker}{\operatorname{Ker}}

\newcommand{\Sym}{\operatorname{Sym}}

\newcommand{\Z}{\mathbb{Z}}

\newcommand{\C}{\mathbb{C}}
\newcommand{\symC}{C^{(2)}}

\makeatletter
\def\subsection{\@startsection{subsection}{2}%
  \z@{.5\linespacing\@plus.7\linespacing}{.5\linespacing}%
  {\normalfont\bfseries}}
\makeatother

\begin{document}

%%%%%%% Title %%%%%%%%%%%%%%%%%%%%%%%%
\title[]{The mixed Hodge structure on the fundamental groups of the Collino surfaces}
\author[]{Daichi Arimatsu}
%\thanks{} 
\address{Department~of~Mathematics,~Institute~of~Science~Tokyo, Tokyo 152-8551, Japan}
\email{arimatsu.d.aa@m.titech.ac.jp}
%\subjclass[2020]{}
%\keywords{} 
%\dedicatory{}

\begin{abstract}
Collino proved that the fundamental group of a certain Zariski open set of the symmetric square of a hyperelliptic curve is isomorphic to the integral Heisenberg group.
We compute the mixed Hodge structure on this fundamental group, and show that the second extension class is expressed by the Abel-Jacobi invariant of the canonical class and the marked points of the hyperelliptic curve, together with a certain $\mathbb{F}_2$-linear map.
\end{abstract}

\maketitle

\section{Introduction}\label{sec: intro}

The theory of mixed Hodge structures on the fundamental group introduced by Hain \cite{Hain}
provides a framework for studying the interplay between the topology and Hodge theory of algebraic varieties. 
It has been observed that the extension classes for the weight filtration encode rich geometric information.
For instance, in the case of curves, the works of Hain \cite{Hain}, Pulte \cite{Pulte}, and Kaenders \cite{Kae}
have shown that these extension classes recover classical geometric invariants via the Abel-Jacobi map.

The purpose of this paper is to extend this series of investigations to certain open surfaces related to hyperelliptic curves.
Let $C$ be a hyperelliptic curve of genus $g \ge 2$, and let $\symC$ be the symmetric square of $C$. We choose a base point $b \in C$,
and consider the open surface
\[
V_b = C^{(2)} \setminus (L + C_b),
\]
where $L \cong \mathbb{P}^1$ is the hyperelliptic pencil and $C_b = C + b$ is the embedded $C$ with the base point $b$. 
Collino \cite{Co} proved that the fundamental group of $V_b$ is isomorphic to the integral Heisenberg group.
Thus, this provides a non-trivial but accessible example.
We call $V_b$ a \textit{Collino surface}.

We choose a base point $p+q$ of $V_b$, where $p,q \in C$. Hain's theory furnishes a mixed Hodge structure on the dual of $\Z \pi_1(V_b, p+q)/J^{s+1}$, where $J$ is the augmentation ideal of the group ring $\Z \pi_1(V_b, p+q)$.
In the case $s=2$, we obtain a mixed Hodge structure of the form
\begin{equation}\label{eq:intro-ext}
0 \to H^1(V_b,\Z) \to (J/J^3)^* \to K \to 0,
\end{equation}
where $H^1(V_b,\Z)$ is the weight $1$ part, and $K$ is the weight $2$ part, isomorphic to the kernel of the cup product map $H^1(V_b,\Z)^{\otimes 2} \to H^2(V_b,\Z)$.
Let $v_0 \in \Lambda^2 H^1(C,\Z)$ be the element corresponding to the cup product on $H^1(C,\Z)$.
We prove that $H^1(V_b,\Z) \cong H^1(C,\Z)$ and that $K$ contains
\begin{equation*}
K_0 \coloneqq \Sym^2 H^1(C,\Z) \oplus \Z v_0
\end{equation*}
as a sublattice of index $2$, with both summands primitive.
By restricting \eqref{eq:intro-ext} to $K_0$, we obtain a mixed Hodge structure which is an extension of $K_0$ by $H^1(C,\Z)$. We denote by $m_{p+q} \in \mathrm{Ext}_{\mathrm{MHS}}(K_0, H^1(C, \Z))$ its extension class.
By Carlson's isomorphism \cite{Car}, we have
\begin{equation} \label{ext}
\mathrm{Ext}_{\mathrm{MHS}}(K_0, H^1(C, \Z)) \cong JH \oplus \mathrm{Pic}^0(C),
\end{equation}
where $JH$ is the intermediate Jacobian of the weight $-1$ Hodge structure
\[
H \coloneqq \hom(\Sym^2 H^1(C,\Z), H^1(C,\Z)).
\]
Our main result is the following.
\begin{theorem}\label{thm: main}
Under the isomorphism \eqref{ext}, the extension class $m_{p+q}$ is given by
\[
m_{p+q} = (\alpha, \: p' + q' - p - q + (2g-2)b - K_C),
\]
where $p', q'$ are the conjugates of $p, q$ under the hyperelliptic involution, $K_C$ is the canonical divisor of $C$, and $\alpha$ is the $2$-torsion point of $JH$ corresponding to the $\mathbb{F}_2$-linear map
\[
\Sym^2H^1(C,\mathbb{F}_2) \to H^1(C,\mathbb{F}_2),
\]
that sends $v \otimes v \mapsto v$ for $v \in H^1(C,\mathbb{F}_2)$.
\end{theorem}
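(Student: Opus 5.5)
The plan is to compute the extension class through Hain's iterated-integral description of $(J/J^3)^*$. Since $K_0$ splits as $\Sym^2 H^1(C,\Z)\oplus \Z v_0$, the class $m_{p+q}$ splits as well: its component on $\Sym^2 H^1(C,\Z)$ lies in $JH$, and its component on $\Z v_0$ lies in $\mathrm{Ext}(\Z(-1),H^1(C))\cong \mathrm{Pic}^0(C)$. I would treat the two components separately, following Pulte's and Kaenders' computations for pointed curves.

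\textbf{Step 1: set up the comparison with $C$.} Use the map $\phi\colon C\times C\setminus(\ldots)\to V_b$ and the identification $H^1(V_b)\cong H^1(C)$. Most of the weight-$2$ classes can be compared with $(J/J^3)^*$ of a curve, or of $C\times C$ minus the diagonal-type divisors, by functoriality of Hain's MHS.

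A period representative of the extension is obtained as follows. Lift an element $\sum a_{ij}[\omega_i|\omega_j]$ of $K_0$ to a closed iterated integral $\int \omega_i\omega_j + \int \eta$, where $\eta$ is a $1$-form on $V_b$ with $d\eta = -\sum a_{ij}\,\omega_i\wedge\omega_j$. Such an $\eta$ exists because the cup product of these classes vanishes in $H^2(V_b)$. The class in $H^1(V_b)$ is then $\int\eta$, taken modulo the Hodge and integral choices. Carlson's formula then reads off the extension class from the difference between a Hodge-compatible lift and an integral lift.

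\textbf{Step 2: the $\mathrm{Pic}^0$ component.} For $v_0$, the cup product class $\sum \omega_i\wedge\omega_i^*$ is cohomologous on $\symC$ to a multiple of the class of $C_b$ plus a combination with $L$. I would compute this via Macdonald's description of $H^2(\symC)$: the pullback of the theta class equals $(g+1)x-\delta/2$ type relations, restricted to $L$ and $C_b$. Consequently $\eta$ can be chosen as a log form with residues along $L$ and $C_b$, that is, a Green-type current. The resulting extension is the Abel–Jacobi image of the divisor obtained by intersecting the residue data with the base point. Concretely, the residue along $C_b$ evaluated at $p+q$ gives $-p-q+(2g-2)b$, mirroring Pulte's $(2g-2)b-K_C$ term. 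The residue along $L$ through the conjugate points gives $p'+q'$, and the normalization produces $-K_C$. Here I would reuse the known curve result (Kaenders/Pulte: the $v_0$ part for $(C,b)$ gives $(2g-2)b-K_C$) and track the additional contribution from the boundary components.

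\textbf{Step 3: the $JH$ component.} This is the main obstacle. Over $\mathbb{Q}$, $\Sym^2H^1$ is already split: the relevant cup products vanish in $H^2(\symC)$ up to boundary classes, and the symmetrized iterated integrals $\int\omega_i\omega_j+\omega_j\omega_i=\int\omega_i\int\omega_j$ are shuffle products. The rational lift $\tfrac12(\int\omega_i\int\omega_j)$ is therefore Hodge-compatible, so the class in $JH$ is torsion, and it is determined by the failure of integrality. For $v\otimes v$, the integral lift must be $\tfrac12(\int v)^2$ corrected by something with half-integral periods, namely $\tfrac12\int v$, because $\tfrac12 x(x\pm1)$ is an integer. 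This is exactly the $\mathbb F_2$-map $v\otimes v\mapsto v$. To make this rigorous, I would use the explicit Heisenberg presentation of $\pi_1(V_b)$ (Collino) to compute the integral lattice of $(J/J^3)^*$ on $\Sym^2$. The key point is that for the lattice $K$ with $[K:K_0]=2$, the integral structure on the $\Sym^2$ part is the divided-square lattice $\{\int v\int v/2 + \int v/2\}$. I would then apply Carlson's formula to this lattice to identify $\alpha$, checking that its $2$-torsion class is independent of the choice of base point.
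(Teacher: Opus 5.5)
\textbf{The gap is in Step 2}, and Step 2 carries all of the dependence on $p$, $q$, $b$. You never evaluate the Hain period $\sum_k\bigl(\int_{\gamma_k}v_0+\xi\bigr)x_k$. Instead you assign pieces of the target divisor to plausible sources: ``the residue along $C_b$ evaluated at $p+q$ gives $-p-q+(2g-2)b$'', ``the residue along $L$ gives $p'+q'$'', ``the normalization produces $-K_C$''. These are read off from the statement, not derived, and they do not describe what actually happens.

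Restrict along $j_q\colon C\setminus\{b,q'\}\to V_b$; the loops $(j_q)_*\gamma_k$ give a basis of $H_1(V_b,\Z)$. The curve $C_q$ meets $C_b$ only at $b$ and meets $L$ only at $q'$. The residues of $\xi_{b,q}=j_q^*\eta_b$ there are $\frac{2g-2}{2\pi\sqrt{-1}}$ and $\frac{2}{2\pi\sqrt{-1}}$. They contribute $(2g-2)\int_p^b\omega$ and $2\int_p^{q'}\omega$, that is, $(2g-2)(b-p)+2(q'-p)$. The stated form appears only after using $p'+q'-p-q\sim 2(q'-p)$.

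Your appeal to Pulte is also misplaced. In $(2g-2)x-K_C$ the point $x$ is the base point of $\pi_1$, whereas here $b$ is a puncture and the base point is $p+q$. The $-K_C$ actually comes from $2\kappa_p=u\bigl((2g-2)p-K_C\bigr)$, with $p$ the base point on $C_q$. Its $(2g-2)p$ is cancelled by the puncture terms.

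\textbf{What is missing} is the mechanism that turns the period into Abel--Jacobi data. The paper proves a two-puncture higher reciprocity law. It expands $\prod_\nu[\gamma_\nu,\gamma_{g+\nu}]=\delta_b\delta_{q'}$ in $\Z\pi_1(C\setminus\{b,q'\},p)$ modulo $J^4$. It then pairs both sides with the length-$3$ homotopy functional $\int\sum_{j,k}2a_{jk}\,\omega\omega_j\bar\omega_k+\omega\,\xi_{b,q}$.
\begin{itemize}
\item The commutator side gives the period minus Kaenders' $\Pi$-terms.
\item The puncture side gives the residue terms above; the $d_bd_{q'}$ term vanishes.
\item Kaenders' higher period relation identifies the $\Pi$-terms with $2\kappa_p$.
\end{itemize}

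You also need the residues themselves. The paper gets their sum $2g$ by Stokes, and their ratio $g-1$ from Collino's relation $\sigma_{C_b}=\sigma_L^{g-1}$ together with $\sigma_L-1\in J^2$. Alternatively, they are the coefficients in $\varphi(v_0')=(2g-2)[C]+2[L]$. Your Macdonald computation would give these if carried out explicitly; as written you only say ``a multiple of $C_b$ plus a combination with $L$''. Without the reciprocity law and the period relation, your outline has no way to produce the class, and in particular no way to produce $-K_C$.

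In Step 1, the extension class is not ``$\int\eta$''. It is the full closed functional $\int\mu+\eta$ evaluated on the loops.

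\textbf{Step 3 is correct} and is in substance the paper's argument. By the shuffle relation the Hodge lift $\int vv$ takes the value $\frac12(\int_\gamma v)^2$ on $\gamma-1$, while $\int vv-\frac12\int v$ is integral. This gives $\tilde\psi_1(x_i\otimes x_i)=\frac12x_i$ and $\tilde\psi_1(x_i\otimes x_j+x_j\otimes x_i)=0$ for $i\ne j$. This is the easy part of the proof, not the main obstacle. Neither the Heisenberg presentation nor the index-$2$ phenomenon is needed; the latter concerns $\sum_i x_i\otimes x_{g+i}$, not $\Sym^2$.
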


In this way, the information of the marked points $b$, $p$, and $q$ is explicitly reflected in the extension class $m_{p+q}$.
On the other hand, $\alpha$ is essentially the second extension class for $\pi_1(\symC, p+q)$ and is independent of the choice of base point. The dependence on the base point shows up only after removing the divisor $L + C_b$.

The proof of Theorem \ref{thm: main} uses Hain's formula \cite{Hain} which expresses the extension class by means of iterated integrals.
The first step is to determine the $JH$-component, and it turns out that this follows from the shuffle relation for iterated integrals.
To compute the $\mathrm{Pic}^0(C)$-component, we pull back the iterated integral to $C$ by $C \cong C_q \subset \symC$.
This reduces the problem to the calculation of an iterated integral on the two-punctured curve $C \setminus \{b, q'\}$.
The key step is to establish a higher reciprocity law in the two-punctured case.
This is an analogue of Kaenders' reciprocity law \cite{Kae} in the one-punctured case.
Finally, by combining our reciprocity law and the higher period relation of Kaenders \cite{Kae}, we arrive at our main result.

The rest of this paper is organized as follows. In Section 2, we recall Hain's theory and Collino's construction. 
In Section 3, we compute the second graded quotient. 
In Section 4, we prove Theorem \ref{thm: main}.

Throughout this paper, for a $\Z$-module $M$ of finite rank, $\Sym^2 M$ and $\Lambda^2 M$ stand for the sub $\Z$-modules of $M^{\otimes 2}$ generated by $x \otimes x$ ($x \in M$) and by $x \otimes y - y \otimes x$ ($x,y \in M$), respectively.

\section{The weight filtration}\label{sec: weight}

In this section, we recall Hain's theory and Collino's construction.
Let $C$ be a hyperelliptic curve of genus $g \ge 2$.
Let $\symC$ be the symmetric square of $C$, which parameterizes effective divisors of degree $2$ on $C$.
Let $\iota \colon C \to C$ denote the hyperelliptic involution.
Let $L \subset \symC$ be the hyperelliptic pencil, namely $L \coloneqq \{ x + \iota(x) \mid x \in C \}$. 
For a point $a \in C$, let $j_a \colon C \to \symC$ be the embedding defined by $x \mapsto x+a$. We denote the image of $j_a$ by $C_a$.

We choose a base point $b$ for $C$. 
We consider the Zariski open set
\[
V_b \coloneqq \symC \setminus (L+C_b).
\]
Collino \cite{Co} proved that the fundamental group of the open surface
$V_b$ is isomorphic to the integral Heisenberg group of genus $g$,
which we denote by $H(g)$. This group is generated by $2g+1$ elements $\{ a_i, b_i, \delta \}_{1 \le i \le g}$ subject to the relations
\[
[a_i, a_j] = [b_i, b_j] = 1, \quad [a_i, \delta] = [b_i, \delta] = 1, 
\quad [a_i, b_j] = \begin{cases}
\delta \quad (i = j) \\
1 \quad (i \neq j)
\end{cases}
\]
for all $i, j$. The center of $H(g)$ is generated by $\delta$, and $H(g)$ fits into the central extension
\begin{equation*}
0 \to \Z \delta \to H(g) \to \Z^{2g} \to 0.
\end{equation*}

In the Collino isomorphism $\pi_1(V_b) \cong H(g)$,
the generator $\delta$ corresponds to the class $\sigma_L$ of a meridian loop around $L$, and $\delta^{g-1}$ corresponds to the class $\sigma_{C_b}$ of a meridian loop around $C_b$. In particular, we have
\begin{equation}\label{eq: Collinoeq}
\sigma_L^{g-1} = \sigma_{C_b} \quad \in \pi_1(V_b).
\end{equation}

We choose a base point $p+q$ of $V_b$, where $p,q\in C$ with $p,q \neq b$ and $q' \neq p$. Let $\Z \pi_1(V_b, p+q)$ be the group ring of the fundamental group.
The augmentation ideal $J$ is defined as the kernel of the augmentation map $\Z \pi_1(V_b, p+q) \to \Z$.
Recall that there is a canonical isomorphism $J/J^2 \cong H_1(V_b, \Z)$. Dually, $(J/J^2)^* \cong H^1(V_b,\Z)$. 
Let $i \colon V_b \to \symC$ be the inclusion. We compute the Hodge structure on $H^1(V_b, \Z)$.

\begin{lemma}\label{prop: wht1}
The mixed Hodge structure on $H^1(V_b, \Z)$ is pure, and we have isomorphisms of pure Hodge structures
\[
H^1(C,\Z) \xleftarrow{j_b^*} H^1(\symC,\Z) \xrightarrow{i^*} H^1(V_b,\Z).
\]
\end{lemma}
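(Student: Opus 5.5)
We prove the two isomorphisms separately and then read off purity from the second one.

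\emph{The map $j_b^*$.} By Macdonald's description of the cohomology of symmetric products, $H^1(\symC,\Z) \cong H^1(C,\Z)$ via the map induced by $C \to \symC$, $x \mapsto x+b$. Concretely, the Abel--Jacobi map $\symC \to \mathrm{Pic}^2(C)$ induces an isomorphism on $\pi_1^{ab} = H_1$, since $\pi_1(\symC)$ is abelian and equal to $H_1(C,\Z)$. The composite $C \xrightarrow{j_b} \symC \to \mathrm{Pic}^2(C)$ is a translate of the Abel--Jacobi embedding of $C$, which is an isomorphism on $H_1$. Hence $j_{b*}\colon H_1(C,\Z)\to H_1(\symC,\Z)$ is an isomorphism. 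Both groups are torsion-free, so $j_b^*$ is an isomorphism of integral cohomology. Because $j_b$ is algebraic, $j_b^*$ is a morphism of Hodge structures, and therefore an isomorphism of pure weight-$1$ Hodge structures.

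\emph{The map $i^*$.} Let $D = L + C_b$. I will use the Gysin (localization) sequence for the smooth compactification $\symC$:
\[
0 \to H^1(\symC,\Z) \xrightarrow{i^*} H^1(V_b,\Z) \to H^0(\tilde D,\Z)(-1) \to H^2(\symC,\Z).
\]
Here $H^0(\tilde D)$ is free on the two components $L$ and $C_b$, and the last map sends each component to its fundamental class. The first map is injective because $H^1_D(\symC) = 0$ for a divisor. All maps are morphisms of mixed Hodge structures, and the image of $H^1(\symC)$ is $W_1$. So $H^1(V_b)$ is pure, and $i^*$ is surjective, exactly when the class map $\Z[L]\oplus\Z[C_b] \to H^2(\symC,\Z)$ is injective. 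It suffices to show that $[L]$ and $[C_b]$ are linearly independent in $H^2(\symC,\mathbb{Q})$, which I will do with intersection numbers:
\[
L^2 = 1-g, \qquad L\cdot C_b = 1, \qquad C_b^2 = 1.
\]
These hold because $L$ is the fibre over a point of $\symC \to \mathrm{Pic}^2$ (the $\mathbb{P}^1$ blown down), so $L^2 = 1-g$. Next, $C_a \cap C_b = \{a+b\}$ transversally, giving $C_b^2 = 1$. Finally, $L \cap C_b = \{b+\iota b\}$ transversally, giving $L\cdot C_b = 1$. The Gram determinant is $(1-g)-1 = -g \neq 0$, so the two classes are independent. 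Then $i^*$ is an isomorphism after tensoring with $\mathbb{Q}$, and is injective over $\Z$.

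The main obstacle is integral surjectivity of $i^*$: the cokernel of $i^*$ must be torsion-free and not just finite. Here the kernel of $\Z^2 \to H^2(\symC,\Z)$ is zero, so the exact sequence gives $\operatorname{coker} i^* = 0$ directly, because the image of $H^1(V_b)$ in $\Z^2$ is that kernel. With a Gysin sequence using $\Z$ coefficients (valid for a normal crossing divisor, or for a union of smooth curves meeting transversally, via $H^1(\symC, \symC\setminus D) = 0$ and $H^2_D \cong \Z^2$), this completes the argument. The one point I would double-check is the identification $H^2_D(\symC,\Z)\cong \Z^{2}$ generated by the Thom classes of the two components. I expect this to follow from excision together with the fact that $D$ has two irreducible components that meet in finitely many points.
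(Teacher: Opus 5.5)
Your proof is correct and follows the paper's route. You use Macdonald for $j_b^*$, then the integral Gysin sequence for $(C^{(2)},V_b)$, where purity and surjectivity of $i^*$ both reduce to injectivity of $\Z[L]\oplus\Z[C_b]\to H^2(C^{(2)},\Z)$. The only difference is the argument for independence of $[L]$ and $[C_b]$: you use the intersection matrix $L^2=1-g$, $L\cdot C_b=C_b^2=1$ (determinant $-g\neq 0$), whereas the paper observes that the Abel--Jacobi map contracts $L$ but not $C_b$. Your worry about $H^2_D(C^{(2)},\Z)\cong\Z^2$ is settled by the duality $H^k_D\cong H_{4-k}(D)$ together with Mayer--Vietoris, since $L\cap C_b$ is a single point.
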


\begin{proof}
Since $j_b^*$ and $i^*$ are morphisms of mixed Hodge structures, it suffices to show that they are isomorphisms.
The fact that $j_b^*$ is an isomorphism is a classical result due to Macdonald \cite{Mac}.
To show that $i^*$ is an isomorphism, we consider the Gysin sequence for the pair $(\symC, V_b)$:
\begin{equation}\label{Gysin}
0 \to  H^1(\symC,\Z) \xrightarrow{i^*} H^1(V_b,\Z) 
\xrightarrow{\mathrm{Res}}  H^0(L,\Z) \oplus H^0(C_b,\Z)
\xrightarrow{\mathrm{Gysin}}  H^2(\symC,\Z).
\end{equation}
Under the Abel--Jacobi map $\symC \to \mathrm{Pic}^2(C)$, $L$ is contracted to a point, while $C_b$ is mapped to a translation of $C$. Therefore the classes $[L]$ and $[C_b]$ are linearly independent in $H^2(\symC,\Z)$, and the Gysin map is injective.
Consequently, the residue map vanishes. Therefore $i^*$ is an isomorphism.
\end{proof}

We write $\pi_1 = \pi_1(V_b, p+q)$ in what follows. By Hain's theory \cite{Hain}, $\hom_{\Z}(\Z\pi_1/J^{s+1},\Z)$ for $s \ge 0$ has a mixed Hodge structure which is compatible with the inclusions
\[
\hom_{\Z}(\Z\pi_1/J^{s+1},\Z) \hookrightarrow \hom_{\Z}(\Z\pi_1/J^{s+2},\Z).
\]
This is defined via iterated integrals. To be more precise, we have an isomorphism
\begin{equation*}
H^0(\bar{B}_s(\mathcal{A}^{\bullet}(\symC \log (L + C_b)))) \xrightarrow{\sim} \hom_{\Z}(J/J^{s+1}, \C),
\end{equation*}
where the left-hand side is the $\C$-vector space of iterated integrals of length $\le s$ on $V_b$ that are homotopy functionals and have at most logarithmic growth at the boundary $L + C_b$.
The weight and Hodge filtrations are naturally defined on the left side.
In our case, since $H^1(V_b,\Z)$ is pure of weight $1$ by Lemma \ref{prop: wht1}, the description in \cite{Hain} section 5 tells us that the weight filtration $W_\bullet$ on $(J/J^{s+1})^*$ is given by
\begin{equation*}
W_l\, (J/J^{s+1})^* = (J/J^{l+1})^*, \quad 1 \le l \le s.
\end{equation*}

We are interested in the extension data at the second step.
Let $K$ be the kernel of the cup product:
\[
K \coloneqq \ker\,\!\left(H^1(V_b,\Z) \otimes H^1(V_b,\Z) \xrightarrow{\cup} H^2(V_b,\Z) \right).
\]
Since $\pi_1(V_b, p+q)^{\mathrm{ab}} \cong H(g)^{\mathrm{ab}} \cong \Z^{2g}$, we have $H_1(V_b,\Z) \cong \Z^{2g}$ and hence it is torsion-free. Therefore, by Hain~\cite{Hain}, $(J^2/J^3)^*$ is canonically isomorphic to $K$, and we have the short exact sequence
\begin{equation}\label{def: ext}
0 \to H^1(V_b,\Z) \to (J/J^3)^* \to K \to 0
\end{equation}
of mixed Hodge structures with $H^1(V_b,\Z)$ the weight $1$ graded quotient and $K$ the weight $2$ graded quotient.

\section{The second graded quotient}\label{sec: Gr2}
In this section, we compute the second graded quotient $K$.
Let $x_1, \dots, x_{2g}$ be a symplectic basis of $H^1(C,\Z)$, namely $(x_i,x_j)=0$ unless $j=i+g$ or $i=j+g$, and $(x_i,x_{g+i})=1$ for $1 \le i \le g$ with respect to the cup product on $C$.
Then $\Sym^2 H^1(C,\Z)$ has the $\Z$-basis
\begin{equation}\label{eq: sym2-basis}
\begin{array}{l}
 x_i \otimes x_i \quad (1 \le i \le 2g),\\
 x_i \otimes x_j + x_j \otimes x_i \quad (1 \le i < j \le 2g).
\end{array}
\end{equation}

Let $v_0$ denote the element of $H^1(C,\Z)^{\otimes 2}$ corresponding to the cup product on $H^1(C,\Z)$. Explicitly,
\[
v_0 = \sum_{i=1}^{g} x_i \wedge x_{g+i} = \sum_{i=1}^{g} (x_i \otimes x_{g+i} - x_{g+i} \otimes x_i).
\]
Recall from \cite{Mac} that the pullback by the embedding $j_a \colon C \hookrightarrow \symC$ induces an isomorphism on $H^1$. This is independent of $a \in C$. Then, for $\alpha \in H^1(C,\Z)$, we write $\alpha' \coloneqq (j_a^*)^{-1}\alpha \in H^1(\symC,\Z)$. We set
\[
v_0' \coloneqq \sum_{i=1}^{g} x_i' \wedge x_{g+i}' \in H^1(\symC,\Z)^{\otimes 2},
\]
and
\[
K_0 \coloneqq \Sym^2 H^1(V_b,\Z) \oplus \Z i^*v_0' \subset H^1(V_b,\Z)^{\otimes 2}.
\]
Here $i \colon V_b \hookrightarrow \symC$ is the inclusion map.

\begin{proposition}\label{Prop: gr2}
The lattice $K_0$ is a sublattice of $K$ of index $2$, and both summands
$\Sym^2 H^1(V_b,\Z)$ and $\Z i^*v_0'$ are primitive in $K$.
\end{proposition}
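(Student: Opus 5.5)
The plan is to reduce the computation of $K$ to Macdonald's description of $H^*(\symC,\Z)$ together with the Gysin sequence \eqref{Gysin} continued one step further. By Lemma \ref{prop: wht1}, $i^*$ identifies $H^1(\symC,\Z)$ with $H^1(V_b,\Z)$. Hence the cup product on $V_b$ is the composite
\[
H^1(\symC,\Z)^{\otimes 2}\xrightarrow{\cup}H^2(\symC,\Z)\xrightarrow{i^*}H^2(V_b,\Z).
\]
By exactness of the Gysin sequence at $H^2(\symC,\Z)$, the kernel of $i^*$ on $H^2$ is exactly $\Z[L]+\Z[C_b]$. Therefore
\[
K=\{t\in H^1(\symC,\Z)^{\otimes 2} : \cup(t)\in \Z[L]+\Z[C_b]\},
\]
transported by $i^*$.

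Next, the cup product on degree-one classes is graded commutative and kills $x\otimes x$. So it factors through the quotient $Q:=M^{\otimes 2}/\Sym^2 M$ with $M=H^1(\symC,\Z)$. This quotient is the exterior square $\bigwedge^2 M$ (as a quotient), which is free with basis the classes $\overline{x_i'\otimes x_j'}$ for $i<j$. I will use Macdonald's theorem in the following form:
\[
H^2(\symC,\Z)=\bigwedge^2 H^1(\symC,\Z)\oplus \Z\eta, \qquad \eta=[C_b],
\]
where the induced map from $Q$ is injective onto the first summand. Write $\theta=\sum_{i=1}^g x_i'\cup x_{g+i}'$. The image of $v_0'$ in $Q$ is $2\,\overline{\sum_i x_i'\otimes x_{g+i}'}$, whose cup product is $2\theta$.

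The key geometric input is to express $[L]$ in this basis. I claim
\[
[L]=(g+1)\eta-\theta .
\]
Since $[L]$ is an integral class, it has the form $a\eta+\omega$ with $\omega\in\bigwedge^2$. Moreover $[L]$ is invariant under the symplectic monodromy, because it is defined for the whole family of hyperelliptic curves. This forces $\omega$ to be a multiple of $\theta$. The coefficients are then fixed by the intersection numbers
\[
\eta^2=1,\quad \eta\theta=g,\quad \theta^2=g(g-1),\qquad L\cdot C_b=1,\quad L^2=1-g .
\]
Indeed $\theta=(g+1)\eta-[L]$ gives $\theta\cdot\eta=g$ and $\theta^2=g^2-g$, as required. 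Alternatively, one can quote the known formula for the class of the $g^1_2$-pencil in $\symC$. This identification is the step I expect to need the most care, namely making the monodromy/invariance argument or the citation precise.

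Granting this, $(\Z[L]+\Z\eta)\cap\bigwedge^2 H^1=\Z\theta$, because $\eta$ generates a complementary direct summand. Hence $K$ is the preimage of $\Z\theta$ under $M^{\otimes 2}\to Q$:
\[
K=\Sym^2M+\Z\,w,\qquad w=\sum_i x_i'\otimes x_{g+i}' .
\]
Since $v_0'=2w-\sum_i(x_i'\otimes x_{g+i}'+x_{g+i}'\otimes x_i')\equiv 2w \pmod{\Sym^2M}$, we get $K/K_0\cong\Z/2$. Here $K_0=\Sym^2M\oplus\Z v_0'$ is a direct sum because $v_0'$ is antisymmetric and nonzero in $Q$.

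Primitivity follows from the same description. $K/\Sym^2M\cong\Z\,\overline{w}$ is torsion-free, so $\Sym^2 M$ is primitive in $K$. The image of $v_0'$ in $K/\Sym^2M$ is $2\overline w$, so that quotient alone does not show primitivity of $\Z v_0'$. Instead, suppose $v_0'=n u$ with $u\in K$. Then $u$ is antisymmetric, so $\tau u=-u$ where $\tau$ swaps tensor factors. The antisymmetric elements of $M^{\otimes2}$ are spanned by the $x_i'\otimes x_j'-x_j'\otimes x_i'$, and $v_0'$ is a primitive vector among them (its coefficients are $\pm1$). Hence $n=\pm1$, and $\Z v_0'$ is primitive in $K$.
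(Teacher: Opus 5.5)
Your route is genuinely different from the paper's. The paper only shows $K_0\subset K$ (via Lemma \ref{prop: cal of cup}). It then gets $\operatorname{rank}K=\operatorname{rank}K_0$ from the group side: Hain's identification $K\cong(J^2/J^3)^*$ plus Sandling--Tahara's computation of $J^2/J^3$ for Collino's Heisenberg group. Finally it pins down the index with $u=\sum_i x_i\otimes x_{g+i}$, using $2u\equiv v_0$ modulo $\Sym^2$ and primitivity of $\bar u$ in $H^1(C,\Z)^{\otimes2}/\Sym^2H^1(C,\Z)$.

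You instead compute $K$ exactly and purely cohomologically, from exactness of the Gysin sequence at $H^2(\symC,\Z)$ and Macdonald's integral splitting $H^2(\symC,\Z)=\bigwedge^2H^1\oplus\Z\eta$. That splitting is correct for $\symC$, since Macdonald's relations start in degree $3$. This avoids the fundamental group entirely and gives $K=\Sym^2M+\Z w$ on the nose. It also makes explicit that $w$ (the paper's $u$) lies in $K$, which the paper uses tacitly. The price is that you need the integral structure of $H^2(\symC,\Z)$ and the exact $\bigwedge^2$-component of $[L]$, rather than just $\varphi(v_0')$ and a rank count. Your direct-sum and primitivity arguments are fine.

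However, your formula $[L]=(g+1)\eta-\theta$ is wrong. By the computation in the proof of Lemma \ref{prop: cal of cup}, $\pi^*\bigl((g+1)\eta-\theta\bigr)=[\Delta_{C^2}]$, so your class is half the class of the diagonal $\{2x : x\in C\}\subset\symC$. But $\pi^*[L]=[\Gamma_\iota]$, and $[\Gamma_\iota]\cdot[\Delta_{C^2}]=2g+2\neq 2-2g=[\Delta_{C^2}]^2$. The correct formula is $[L]=\theta-(g-1)\eta$, which is \eqref{eq: cup of v_0} divided by $2$; this is legitimate because $H^2(\symC,\Z)$ is torsion-free.

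Your claim that the intersection numbers fix the coefficients is also false. Writing $[L]=a\eta+c\theta$, the conditions $L\cdot C_b=1$ and $L^2=1-g$ give only $a=1-cg$ and $c^2=1$, so both signs survive, and you picked the wrong one.

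Fortunately, everything downstream uses only $c=\pm1$, i.e. $\Z[L]+\Z\eta=\Z\theta+\Z\eta$, so your description of $K$ and the conclusion stand. The cleanest repair is to drop the monodromy argument altogether and take $\theta=(g-1)\eta+[L]$ from Lemma \ref{prop: cal of cup}. The monodromy argument would in any case need A'Campo's theorem that the hyperelliptic monodromy has finite index in $\mathrm{Sp}_{2g}(\Z)$.
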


For the proof of this proposition, we need the following computation. We denote by $\varphi \colon H^1(\symC,\Z)^{\otimes 2} \to H^2(\symC,\Z)$ the cup product map.

\begin{lemma}\label{prop: cal of cup}
We have
\begin{equation}\label{eq: cup of v_0}
\varphi(v_0') = (2g-2)[C] + 2[L] \: \in H^2(\symC,\Z),
\end{equation}
where $[C]$ is the class of the curves $C_a \subset \symC$.
In particular, we have $i^*\varphi(v_0') = 0$.
\end{lemma}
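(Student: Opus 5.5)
The cup product $\varphi(v_0')=\sum_i x_i'\cup x_{g+i}'$ is computed by pairing against a basis of the relevant part of $H_2(\symC,\Z)$. Since $i^*[L]=0$ and $i^*[C_b]=0$ (the classes of divisors removed from $\symC$ restrict to zero on $V_b$), the second claim follows from the first: $[C]=[C_b]$ by independence of $a$, so $i^*\varphi(v_0')=(2g-2)i^*[C_b]+2i^*[L]=0$.

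For the main identity I use Macdonald's description of the cohomology ring of $\symC$ \cite{Mac}. Write $\xi_i=x_i'$ and let $\eta\in H^2(\symC,\Z)$ be the class dual to $j_a(C)$, i.e.\ the class of the divisor $C_a$. Macdonald's relations give $\xi_i\xi_{g+i}=\sigma_i$ with $\sigma_i\sigma_j$ nonzero and $\sigma_i^2=0$. For the symmetric square one has $\eta^2=1$ (two translates $C_a$, $C_{a'}$ meet in the single point $a+a'$), $\eta\,\sigma_i=1$, and $\sigma_i\sigma_j=1$ for $i\ne j$, all in $H^4\cong\Z$. Thus $\theta=\sum_i\sigma_i$ is the pullback of the theta class under the Abel--Jacobi map $u\colon\symC\to\mathrm{Pic}^2(C)$. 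This identification is really the content: $v_0'$ corresponds to the symplectic form, and its cup square pulls back the theta divisor class.

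It then suffices to check the classical identity $u^*\theta=(g-1)\cdot... $ in the form $\theta=(2g-2)\eta+2[L]$ modulo the subgroup of $H^2$ orthogonal to everything. I will verify it by intersecting both sides with a spanning set of curve classes: $[C_a]$, $[L]$, and the classes $\sigma_j$ (or the cycles dual to $\xi_k\xi_l$).

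\textbf{Intersection numbers.}
\begin{itemize}
\item $\theta\cdot[C_a]=g$, since $u$ restricted to $C_a$ is an Abel--Jacobi embedding of $C$ and $\theta\cdot C=g$.
\item $\theta\cdot[L]=0$, since $L$ is contracted by $u$.
\item $[C]\cdot[C]=1$.
\item $[C]\cdot[L]=1$: the curve $C_a$ meets $L$ only at $a+\iota(a)$, transversally.
\item $[L]^2=1-g$, by the standard computation $L^2=1-g$ on $\symC$ (equivalently, via $[L]=(g+1)\eta-\theta$).
\end{itemize}

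The right-hand side then gives $(2g-2)+2=2g$ against $C$ (wait, this should be $g$). Rather than trusting individual numbers, I would use the known formula for the class of $L$: for $C^{(2)}$ with a $g^1_2$, $[L]=(g+1)\eta-\theta$ (from the secant/diagonal formula, or equivalently $\delta/2=(g+1)\eta-\theta$ and $L\sim$ related). This gives $\theta=(g+1)\eta-[L]$. Combining with the sign conventions for $v_0'=\sum\xi_i\wedge\xi_{g+i}=2\sum\xi_i\xi_{g+i}$, namely $\varphi(v_0')=\sum(\xi_i\xi_{g+i}-\xi_{g+i}\xi_i)=2\theta$ up to sign, one should obtain $2\theta$ expressed as $(2g-2)\eta+2[L]$ after using the correct sign convention, possibly $\theta=(g-1)\eta+[L]$, which is consistent with $\theta\cdot L=(g-1)+(1-g)=0$ and $\theta\cdot C=(g-1)+1=g$. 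So the target is $\theta=(g-1)\eta+[L]$, and the first claim follows from $\varphi(v_0')=2\theta$.

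\textbf{Proof of $\theta=(g-1)\eta+[L]$.} Since $H^2(\symC)$ modulo torsion need not be spanned by $\eta$ and $[L]$ alone, I will check that the difference $D=\theta-(g-1)\eta-[L]$ pairs to zero with all of $H^2$, using Macdonald's generators: $\eta$ and the products $\xi_k\xi_l$. The pairings with $\eta$ and $[L]$ were computed above. For products $\xi_k\xi_l$, the pairings of $\theta$ follow from Macdonald's relations, $\eta\,\xi_k\xi_l$ equals the cup product on $C$, and $[L]\cdot\xi_k\xi_l=0$ because $\xi$ pulls back to zero on $L\cong\mathbb{P}^1$. Torsion-freeness of $H^2(\symC,\Z)$ (also from Macdonald) then gives $D=0$.

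The main obstacle is bookkeeping the signs and factors of $2$ in Macdonald's ring relations, in particular making sure $\varphi(v_0')=2\theta$ rather than $\theta$.
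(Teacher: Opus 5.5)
Your argument is correct, but only your final paragraph actually proves the lemma. The exploratory lines before it contain false statements that should be removed:
\begin{itemize}
\item The opening formula should be $\varphi(v_0')=2\sum_i x_i'\cup x_{g+i}'$.
\item There is no sign ambiguity: $\varphi(v_0')=\sum_i(\xi_i\xi_{g+i}-\xi_{g+i}\xi_i)=2\theta$ exactly, with your $\theta=\sum_i\xi_i\xi_{g+i}$ and your $\eta=[C]$.
\item The target ``$\theta=(2g-2)\eta+2[L]$'' needs $2\theta$ on the left.
\item $[L]=(g+1)\eta-\theta$ is wrong. That class pulls back to $[\Delta_{C^2}]$, i.e.\ it is half the diagonal of $\symC$, and it pairs to $2g\neq 0$ with $\theta$.
\end{itemize}
You also never need $[L]^2$.

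The pairing argument itself closes. Set $D=\theta-(g-1)\eta-[L]$. Then:
\begin{itemize}
\item $D\cdot\eta=g-(g-1)-1=0$.
\item $D\cdot\xi_j\xi_{g+j}=(g-1)-(g-1)-0=0$, using $\sigma_i\sigma_j=1$ for $i\neq j$ and $\sigma_j^2=0$.
\item $D\cdot\xi_k\xi_l=0$ when $\{k,l\}$ is not a symplectic pair, because $\int_{\symC}\xi_i\xi_{g+i}\xi_k\xi_l=0$ for all $i$ and $x_k\cup x_l=0$.
\end{itemize}
Since $\eta$ and the $\xi_k\xi_l$ generate the torsion-free group $H^2(\symC,\Z)$, unimodularity gives $D=0$, hence $\varphi(v_0')=(2g-2)[C]+2[L]$.

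This is a different route from the paper's. The paper never computes on $\symC$ itself. It pulls back along the double cover $\pi\colon C^2\to\symC$, which is injective on $H^2$ by torsion-freeness. It then expands $\pi^*\varphi(v_0')$ by K\"unneth and the diagonal formula, and compares with $\pi^*[C]=p_1^*[\mathrm{pt}]+p_2^*[\mathrm{pt}]$ and $\pi^*[L]=[\Gamma_\iota]$. The key input is $[\Gamma_\iota]+[\Delta_{C^2}]=2(p_1^*[\mathrm{pt}]+p_2^*[\mathrm{pt}])$, which encodes $\iota^*=-1$ on $H^1(C)$.

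That is a short mechanical check. It needs neither Macdonald's integral generators of $H^2(\symC,\Z)$ nor intersection numbers on $\symC$ such as $C_a\cdot L=1$ or the top-degree products $\sigma_i\sigma_j$; in practice one would verify those by exactly that pullback anyway.

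Your version is more conceptual. It identifies $\tfrac12\varphi(v_0')$ with the Abel--Jacobi pullback of the theta class, and reduces the lemma to the classical relation $\theta=(g-1)[C]+[L]$, checked by Poincar\'e duality. That relation is equivalent to the lemma after dividing by $2$ in the torsion-free $H^2(\symC,\Z)$. In your form, though, it shows at once that $\sum_i x_i\otimes x_{g+i}$ itself lies in $K$. The proof of Proposition~\ref{Prop: gr2} uses this implicitly when it treats $\bar u$ as an element of $K/K_0$.
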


\begin{proof}
The last sentence follows from \eqref{eq: cup of v_0} because $[L]$ and $[C]$ vanish on $V_b$.
Therefore, it suffices to prove \eqref{eq: cup of v_0}.
Let $\pi \colon C^2 \to \symC$ denote the natural projection, and let $p_1, p_2 \colon C^2 \to C$ denote the first and second projections, respectively.
Since $H^2(\symC,\Z)$ is torsion-free (\cite{Mac}), the pullback by $\pi$ on $H^2$ is injective. Hence it is enough to prove \eqref{eq: cup of v_0} after pulling it back to $H^2(C^2,\Z)$.
First, we compute the pullback of $\varphi(v_0')$.
Since $\pi^*\alpha' = p_1^*\alpha + p_2^*\alpha$, we obtain
\begin{equation}\label{eq1}
\begin{split}
\pi^*\varphi(v_0') &= 2\sum_{i=1}^{g} (p_1^*x_i+p_2^*x_i) \cup (p_1^*x_{g+i}+p_2^*x_{g+i}) \\
&= 2\sum_{i=1}^{g} \biggl( p_1^*(x_i\cup x_{g+i}) + p_2^*(x_i\cup x_{g+i}) + p_1^*x_i\cup p_2^*x_{g+i} + p_2^*x_i\cup p_1^*x_{g+i} \biggr) \\
&= 2g(p_1^*[\mathrm{pt}] + p_2^*[\mathrm{pt}])+ 2\sum_{i=1}^{g} \biggl( p_1^*x_i\cup p_2^*x_{g+i} + p_2^*x_i\cup p_1^*x_{g+i} \biggr).
\end{split}
\end{equation}
where $[\mathrm{pt}]$ is the class of a point of $C$.
Next, we recall the formula for the class of the diagonal $[\Delta_{C^2}]$ in $H^2(C^2,\Z)$ (\cite{Bott-Tu}, Lemma 11.22):
\begin{equation}\label{eq: diag formula}
[\Delta_{C^2}] = p_1^*[\mathrm{pt}] + p_2^*[\mathrm{pt}] - \sum_{i=1}^{g} \biggl( p_1^*x_i\cup p_2^*x_{g+i} + p_2^*x_i\cup p_1^*x_{g+i} \biggr).
\end{equation}
By \eqref{eq: diag formula} and \eqref{eq1}, we obtain
\begin{align}
\pi^*\varphi(v_0') = (2g+2)(p_1^*[\mathrm{pt}] + p_2^*[\mathrm{pt}]) - 2[\Delta_{C^2}]. \label{eq: LHS result}
\end{align}

Now we compute the right-hand side of \eqref{eq: cup of v_0}.
Recall that $\pi^*[L]$ is the class of the graph $\Gamma_\iota$ of the involution $\iota$.
The class $[\Gamma_\iota]$ satisfies $[\Gamma_\iota] + [\Delta_{C^2}] = 2(p_1^*[\mathrm{pt}] + p_2^*[\mathrm{pt}])$. Thus,
\[
\pi^*(2[L]) = 2(2(p_1^*[\mathrm{pt}] + p_2^*[\mathrm{pt}]) - [\Delta_{C^2}]) = 4(p_1^*[\mathrm{pt}] + p_2^*[\mathrm{pt}]) - 2[\Delta_{C^2}].
\]
Adding multiples of $\pi^*[C] = p_1^*[\mathrm{pt}] + p_2^*[\mathrm{pt}]$, we obtain
\begin{align}
\phantom{=}\pi^*((2g-2)[C] + 2[L])
= (2g+2)(p_1^*[\mathrm{pt}] + p_2^*[\mathrm{pt}]) - 2[\Delta_{C^2}]. \label{eq: RHS result}
\end{align}
Comparing \eqref{eq: LHS result} and \eqref{eq: RHS result}, we obtain
\[
\pi^*\varphi(v_0') = \pi^*((2g-2)[C] + 2[L]).
\]
This proves \eqref{eq: cup of v_0}.
\end{proof}
We now prove Proposition \ref{Prop: gr2}.

\begin{proof}[(Proof of Proposition \ref{Prop: gr2})]
The inclusion $\Sym^2H^1(V_b,\Z) \subset K$ is immediate from the skew-symmetry of the cup product.
By Lemma \ref{prop: cal of cup}, we also have $\Z i^*v_0' \subset K$.
Hence $K_0 \subset K$.
In the following, we identify $K_0$ and $K$ as sublattices of $H^1(C,\Z)^{\otimes2}$ via the isomorphism $H^1(V_b,\Z) \cong H^1(C,\Z)$ in Lemma \ref{prop: wht1}.
Since the two summands of $K_0$ are clearly primitive in $H^1(C,\Z)^{\otimes2}$, they are primitive in $K$.

Next, we show that $K/K_0$ is finite.
In general, Sandling--Tahara \cite{Sand-Tahara} computed $J^s/J^{s+1}$ for groups whose lower central series has free abelian successive quotients.
If we apply their result to our integral Heisenberg group, we obtain a (non-canonical) isomorphism
\[
J^2/J^3 \cong ((J/J^2)^{\otimes2}/\Lambda^2(J/J^2)) \oplus \Z\delta.
\]
Hence $J^2/J^3$ has the same rank as $K_0$, so $K_0$ is of finite index in $K$.

Finally, we prove $K/K_0 \cong \Z/2\Z$. We set
\[
u\coloneqq \sum_{i=1}^{g}x_i\otimes x_{g+i}.
\]
Modulo $\Sym^2 H^1(C,\Z)$, we have
\[
2u
=\sum_{i=1}^{g}\bigl(x_i\otimes x_{g+i}+x_{g+i}\otimes x_i\bigr)
+\sum_{i=1}^{g}\bigl(x_i\otimes x_{g+i}-x_{g+i}\otimes x_i\bigr)
\equiv v_0.
\]
This shows that $2\bar{u}=0$ in $K/K_0$, where $\bar{u}$ is the image of $u$ in $K/K_0$. Clearly, $\bar{u}\neq 0$. Moreover, a direct computation shows that the image of $u$ in $H^1(C,\Z)^{\otimes 2}/\Sym^2 H^1(C,\Z)$ is primitive. Hence the image of $u$ generates the rank $1$ lattice $K/\Sym^2 H^1(C,\Z)$, so $\bar{u}$ generates $K/K_0$.
This completes the proof of Proposition \ref{Prop: gr2}.
\end{proof}
\section{Extension class}\label{sec: ext}
We define the extension class 
\[
m_{p+q} \in \mathrm{Ext}_{\mathrm{MHS}}(K_0, H^1(V_b,\Z))
\]
as the pullback of the extension class of \eqref{def: ext} by $K_0 \hookrightarrow K$.
Here $p+q$ is the base point of the fundamental group of $V_b$.
Let $J\mathrm{Hom}(K_0, H^1(V_b,\mathbb{Z}))$ denote the intermediate Jacobian associated to the pure Hodge structure $\mathrm{Hom}(K_0, H^1(V_b,\mathbb{Z}))$ of weight $-1$.
By Carlson's isomorphism \cite{Car} and the isomorphism $H^1(V_b,\Z) \cong H^1(C,\Z)$ in Lemma \ref{prop: wht1}, we have
\[
\Psi \colon \mathrm{Ext}_{\mathrm{MHS}}(K_0, H^1(V_b,\Z)) \xrightarrow{\cong} J\mathrm{Hom}(K_0, H^1(V_b,\Z)) \xrightarrow{\cong} J\mathrm{Hom}(K_0, H^1(C,\Z)).
\]
By the definition of $K_0$, we have
\begin{align}\label{eq: decomp.of.J}
J\mathrm{Hom}(K_0, H^1(C,\Z)) \cong JH \oplus J\mathrm{Hom}(\Z v_0, H^1(C,\Z))
\cong JH \oplus \mathrm{Pic}^0(C),
\end{align}
where $JH$ is the intermediate Jacobian of the weight $-1$ Hodge structure
\[
H \coloneqq \hom(\Sym^2 H^1(C,\Z), H^1(C,\Z)).
\]
According to this decomposition, we write 
\[
\Psi(m_{p+q}) = (\psi_1, \psi_2)
\]
where $\psi_1 \in JH$ and $\psi_2 \in \mathrm{Pic}^0(C)$. 

Recall that the set of $2$-torsion points of $JH$ is identified with
\[
\mathrm{Hom}_{\mathbb{F}_2}(\Sym^2H^1(C,\mathbb{F}_2),H^1(C,\mathbb{F}_2)).
\]
Over $\mathbb{F}_2$, we have $\Lambda^2H^1(C,\mathbb{F}_2) \subset \Sym^2H^1(C,\mathbb{F}_2)$, where $\Lambda^2$ is as defined in p.~3. Moreover, the map
\begin{equation}\label{eq:f2-quad-map}
 H^1(C,\mathbb{F}_2)\to \Sym^2H^1(C,\mathbb{F}_2)/\Lambda^2H^1(C,\mathbb{F}_2), \quad v \mapsto v \otimes v,
\end{equation}
is $\mathbb{F}_2$-linear. By a direct computation, this is an isomorphism.
We denote by $\theta$ the composition of the inverse of \eqref{eq:f2-quad-map} and the projection
\[
\Sym^2H^1(C,\mathbb{F}_2)\twoheadrightarrow \Sym^2H^1(C,\mathbb{F}_2)/\Lambda^2H^1(C,\mathbb{F}_2).
\]
Thus
\[
\theta \in \hom_{\mathbb{F}_2}(\Sym^2 H^1(C,\mathbb{F}_2), H^1(C,\mathbb{F}_2)).
\]
Then Theorem \ref{thm: main} can be reformulated as follows.

\begin{theorem}\label{thm: main'}
The following holds.
\begin{enumerate}
\item[(1)] $\psi_1$ is the $2$-torsion point in $JH$ corresponding to $\theta$.
\item[(2)] We have 
\[
\psi_2 = p' + q' - p - q + (2g-2)b - K_C,
\]
where $p', q'$ are the conjugates of $p, q$ under the hyperelliptic involution, and $K_C$ is the canonical divisor of $C$.
\end{enumerate}
\end{theorem}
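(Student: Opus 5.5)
We compute via Hain's description of the extension class by iterated integrals. Fix the Hodge-theoretic splitting: choose holomorphic forms $\omega_1,\dots,\omega_g$ and antiholomorphic forms $\bar\omega_1,\dots,\bar\omega_g$ on $C$, and pull them back to $\symC$ as the forms $\omega'$ with $\pi^*\omega' = p_1^*\omega+p_2^*\omega$. These restrict to closed forms on $V_b$ representing $i^*x'$. For each element $\sum a_{jk}\, x_j\otimes x_k$ of $K_0$ we build a closed length-two iterated integral
\[
\sum a_{jk}\int \phi_j'\phi_k' + \int \xi,
\]
where $\xi$ is a logarithmic $1$-form on $\symC$ along $L+C_b$ with $d\xi = -\sum a_{jk}\phi_j'\wedge\phi_k'$. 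Carlson's isomorphism then reads off the extension class as the functional on $H_1(V_b,\Z)$ given by evaluating these integrals on loops, taken modulo periods and modulo the $F^0$-part. We treat the two summands of $K_0$ separately, as in \eqref{eq: decomp.of.J}.

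For part (1), the elements $x\otimes y+y\otimes x$ of $\Sym^2$ need no correction form, since $\phi_x'\wedge\phi_y'+\phi_y'\wedge\phi_x'=0$. By the shuffle relation,
\[
\int_\gamma(\phi_x'\phi_y'+\phi_y'\phi_x') = \int_\gamma\phi_x'\cdot\int_\gamma\phi_y'.
\]
For $x\otimes x$, the shuffle relation gives $\int_\gamma\phi_x'\phi_x'=\tfrac12\bigl(\int_\gamma\phi_x'\bigr)^2$. If $\phi_x'$ is integral with period $n$ on $\gamma$, the class modulo the lattice is $\tfrac12 n^2\equiv \tfrac12 n \pmod{\Z}$. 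This depends only on $n$ mod $2$ and is linear over $\mathbb{F}_2$. So the functional is $\tfrac12$ times the map $x\otimes x\mapsto x$ and $x\otimes y+y\otimes x\mapsto 0$ (products of periods are integers). This is exactly $\theta$, viewed as a $2$-torsion point. Some care is needed to check that this real-valued computation agrees with Carlson's normalization, i.e.\ that the Hodge-type correction vanishes because the products are quadratic in periods. The outcome is independent of the base point, as the introduction predicts.

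For part (2), the element $v_0'$ requires a genuine logarithmic form. By Lemma \ref{prop: cal of cup}, $\varphi(v_0')$ is the class of $(2g-2)C_b+2L$, so $\xi$ can be taken with residues $2g-2$ along $C_b$ and $2$ along $L$. The meridian of $C_b$ is $\sigma_L^{g-1}$ by \eqref{eq: Collinoeq}, which keeps the two residues consistent.

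To evaluate on $H_1(V_b)\cong H_1(C)$, we use loops in $C_q\subset\symC$, i.e.\ we pull back along $j_q$. Then $j_q^*\phi'=\phi$, and $j_q^{-1}(L+C_b)=\{q',b\}$ (using $q'\neq p$, $p,q\neq b$). The problem therefore reduces to an iterated integral $\int\sum(\phi_i\phi_{g+i}-\phi_{g+i}\phi_i)+\int\eta$ on $C\setminus\{b,q'\}$, based at $p$. Here $\eta$ has residues $2g-2$ at $b$ and $2$ at $q'$, up to the sign coming from $d\eta=-\sum\phi_i\wedge\phi_{g+i}$ (times $2$). The pulled-back $\eta$ also contains the restriction of the $C_q$-direction. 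We must identify $j_q^*\xi$ precisely, including its residue at $q'$ and the dependence on $q$ through the symmetric structure. This step is where the $q'-q$ and $p'-p$ terms should appear; exploiting the symmetry in $p,q$ will give the $p'-p$ part.

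The main obstacle is then a two-puncture reciprocity law, the analogue of Kaenders': the class of such an iterated integral in $\mathrm{Pic}^0(C)$ equals (residue divisor) minus (degree)$\cdot$(base point), corrected by $K_C$. We will prove it by cutting $C$ into the standard $4g$-gon and applying Stokes to $\eta\cdot\int\phi$. This expresses the period functional through the residues at $b$ and $q'$, evaluated via Abel–Jacobi from the base point $p$, plus the boundary term of the $4g$-gon. That boundary term is handled by Kaenders' higher period relation and produces $-K_C$ (as $(2g-2)$ times a point, minus $K_C$). Collecting terms and normalizing, we expect $p'+q'-p-q+(2g-2)b-K_C$. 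The bookkeeping of signs and factors of $2$, coming from $v_0$ versus $\sum\phi_i\wedge\phi_{g+i}$ and from $\pi^*$ doubling, must be checked carefully.
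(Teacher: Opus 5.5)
Your part (1) is the paper's argument: the shuffle relation applied on the basis of $\Sym^2H^1(C,\Z)$.

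For the residues of $\xi_{b,q}$ you take a different and cleaner route than the paper. The form $2\sum x_i'\wedge x_{g+i}'$ represents $\varphi(v_0')=(2g-2)[C]+2[L]$, and $[C],[L]$ are independent. Hence the residues of $\eta_b$ along $C_b$ and $L$ are forced to be $\frac{2g-2}{2\pi\sqrt{-1}}$ and $\frac{2}{2\pi\sqrt{-1}}$. They restrict to the residues at $b$ and $q'$ because $C_q$ meets $C_b$ and $L$ transversally in one point each ($[C]^2=[C]\cdot[L]=1$). The paper instead gets the sum $2g$ by Stokes on $C$, and the ratio $g-1$ from $\sigma_{C_b}\equiv\sigma_L^{g-1}$ modulo $J^3$. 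Your version does not need the Collino relation at all.

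The genuine gap is the step you call the main obstacle, the two-puncture reciprocity law; it is the real content of part (2), and your sketch does not establish it. Stokes applied to $F\eta$, with $F(x)=\int_p^x\omega$ on the $4g$-gon, does not close as written, for three reasons.

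\begin{enumerate}
\item The form is not closed: $d\xi_{b,q}=-2\sum a_{jk}\omega_j\wedge\bar\omega_k$. So you pick up the area term $\int F\,d\eta$, and also $\int\omega\wedge\eta$ unless $\eta$ is chosen of type $(1,0)$. Your plan does not account for either.
\item Edge pairing only produces $\Pi(\int\omega;\int\eta)$ with \emph{line} integrals of $\eta$. But $u(\psi_2)$ is $\Pi(\int\omega;\int(v_0+\xi_{b,q}))$, which involves the homotopy-invariant length-two iterated integral. The $v_0$ part never enters your computation.
\item Kaenders' higher period relation can only be invoked once you have produced exactly the combination $\sum_{j,k}a_{jk}\{-2\Pi(\int\omega\omega_j;\int\bar\omega_k)+2\Pi(\int\omega\int\bar\omega_k;\int\omega_j)+\Pi(\int\omega;\int\omega_j\int\bar\omega_k)\}$. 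Nothing in your plan generates it.
\end{enumerate}

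The missing idea is the paper's. The length-three integral $I=\int\sum 2a_{jk}\,\omega\omega_j\bar\omega_k+\omega\,\xi_{b,q}$ is a homotopy functional precisely because of the equation for $d\xi_{b,q}$. Pair it with both sides of $\prod_{\nu}[\gamma_\nu,\gamma_{g+\nu}]=\delta_b\delta_{q'}$, expanded modulo $J^4$.
\begin{itemize}
\item The commutator side gives $\Pi(\int\omega;\int(v_0+\xi_{b,q}))$ minus exactly Kaenders' combination.
\item On the other side, $\int_{d_bd_{q'}}I=0$, and $\int_{d_b+d_{q'}}I$ reduces to the logarithmic parts. This gives $(2g-2)\int_p^b\omega+2\int_p^{q'}\omega$ modulo periods.
\end{itemize}
A Stokes argument could presumably be pushed through, for instance by integrating $F\,\omega_j\wedge\bar\omega_k$ by parts using $\omega\wedge\omega_j=0$. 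That bookkeeping, however, is exactly what is missing.

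Your account of where $p'$ and $q$ enter is also off. You need no finer knowledge of $j_q^*\xi_{b,q}$ than its residues, and no symmetry in $p,q$. The computation gives $u(\psi_2)=\bigl(2\int_p^{q'}\omega_i+(2g-2)\int_p^b\omega_i\bigr)_i+2\kappa_p$ with $2\kappa_p=u((2g-2)p-K_C)$. One then converts $2(q'-p)\sim p'+q'-p-q$ using $p+p'\sim q+q'$, since both divisors lie in the $g^1_2$.
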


The rest of this section is devoted to the proof of Theorem \ref{thm: main'}.
In \S \ref{sec: ext-iterated-integral}, we recall Hain's formula \cite{Hain} expressing the extension class in terms of Chen's iterated integrals.
In \S \ref{sec: proof1}, we prove Theorem \ref{thm: main'} (1).
In \S \ref{sec: proof2}, we reduce the proof of Theorem \ref{thm: main'} (2) to a higher reciprocity law on two-punctured curves (Proposition \ref{prop: rec.law}).
Finally, in \S \ref{sec: proof3}, we prove Proposition \ref{prop: rec.law}.

\subsection{Extension class and iterated integrals}\label{sec: ext-iterated-integral}
In this subsection, we recall Hain's formula \cite{Hain} expressing the extension class in terms of Chen's iterated integrals (in the present situation).
We consider the embedding
\[
j_q \colon C \setminus \{b, q'\} \hookrightarrow V_b, \quad x \mapsto x+q.
\]
Choose loops $\gamma_1, \dots, \gamma_{2g}$ in $C \setminus \{b, q'\}$ which represent a symplectic basis of
$H_1(C,\Z)$ such that $\int_{\gamma_k} x_i = \delta_{ik}$.
We denote the homology class of each loop $\gamma_k$ by the same symbol $\gamma_k$.
Then the pushforwards $(j_q)_*\gamma_1, \dots, (j_q)_*\gamma_{2g}$ form a basis of $H_1(V_b,\Z)$.
The point $\Psi(m_{p+q}) \in J\mathrm{Hom}(K_0,H^1(C,\Z))$ is represented by the linear map
\[
\tilde{\Psi} \colon K_0 \otimes \C \to H^1(C,\C)
\]
defined as follows.
Let $\mu = \sum_{i,j} [w_i]\otimes[w'_j]$ be an element of $K_0 \otimes \C$, where $w_i,w'_j \in \mathcal{A}^1(\symC \log (L+C_b))$ are closed $1$-forms.
We take an element $\eta$ of $\mathcal{A}^1(\symC \log (L+C_b))$ such that
\[
\sum w_i \wedge w'_j + d\eta = 0.
\]
Then $\tilde{\Psi}(\mu) \in H^1(C,\C)$ is given by
\begin{equation}\label{eq:Psi-mpq}
\tilde{\Psi}(\mu)
= \sum_{k=1}^{2g} \left( \int_{(j_q)_* \gamma_k} \sum w_i w'_j + \eta \right) x_k \: \in H^1(C,\C).
\end{equation}
According to the decomposition \eqref{eq: decomp.of.J}, we write
\[
\tilde{\Psi} = (\tilde{\psi}_1,\tilde{\psi}_2),
\]
where $\tilde{\psi}_1 \in \hom_{\C}(\Sym^2 H^1(C,\C), H^1(C,\C))$ and $\tilde{\psi}_2 \in H^1(C,\C)$.

We can pull back the iterated integral \eqref{eq:Psi-mpq} via $j_q$:
\begin{equation}\label{eq: pullback.iter.int}
\int_{(j_q)_* \gamma_k} \sum w_i w'_j + \eta
= \int_{\gamma_k} \left( \sum j_q^* w_i \, j_q^* w'_j + j_q^*\eta \right).
\end{equation}
This reduces the calculation of $\Psi(m_{p+q})$ to that of an iterated integral on the two-punctured curve $C\setminus\{b,q'\}$.

\subsection{Proof of Theorem \ref{thm: main'} (1)}\label{sec: proof1}
{\pushQED{\qed}
In this subsection, we prove Theorem \ref{thm: main'} (1). In view of the definition of $\theta$, it suffices to show that for all $v \in H^1(C,\Z)$,
\begin{equation}\label{eq:psi1-vv}
\tilde{\psi}_1(v \otimes v) = \frac{1}{2}\,v.
\end{equation}
By the basis description \eqref{eq: sym2-basis} of $\Sym^2 H^1(C,\Z)$, it suffices to show
\begin{equation}\label{eq:psi1-basis}
\begin{aligned}
\tilde{\psi}_1(x_i \otimes x_i) &= \frac{1}{2}x_i,\\
\tilde{\psi}_1(x_i \otimes x_j + x_j \otimes x_i) &= 0 \quad (i \ne j).
\end{aligned}
\end{equation}

In general, for an element of $\Sym^2 H^1(V_b,\Z)$ of the form $[w] \otimes [w'] + [w'] \otimes [w]$, where $w,w'$ are closed $1$-forms on $V_b$,
we have $w \wedge w' + w' \wedge w= 0$, so the $\eta$-term in \eqref{eq:Psi-mpq} is $0$.
By \eqref{eq:Psi-mpq}, we have
\[
\tilde{\psi}_1([w] \otimes [w'] + [w'] \otimes [w]) = \sum_{k=1}^{2g} \left( \int_{(j_q)_*\gamma_k} w w'+ w' w \right) x_k.
\]
We choose $w,w'$ so that $[j_q^*w] = x_i$ and $[j_q^*w'] = x_j$. 
By abuse of notation, we write $x_i = j_q^*w$ and $x_j = j_q^*w'$.
By \eqref{eq: pullback.iter.int}, we obtain
\begin{align*}
\tilde{\psi}_1(x_i \otimes x_j + x_j \otimes x_i)
&= \sum_{k=1}^{2g} \left( \int_{\gamma_k} x_i x_j + x_j x_i \right) x_k \\
&= \sum_{k=1}^{2g} \left( \int_{\gamma_k} x_i \cdot \int_{\gamma_k} x_j \right) x_k \\
&= \delta_{ij}\,x_i.
\end{align*}
Here, the second equality follows from the shuffle relation for iterated integrals.
Thus \eqref{eq:psi1-basis} follows.
This completes the proof of Theorem \ref{thm: main'} (1).
\popQED}

\subsection{Reduction of the proof of Theorem \ref{thm: main'} (2)} \label{sec: proof2}
In this subsection, we reduce the proof of Theorem \ref{thm: main'} (2) to a higher reciprocity law on two-punctured curves (Proposition \ref{prop: rec.law}).
This is done in two steps. First, we reformulate Theorem \ref{thm: main'} (2) in terms of the Abel--Jacobi map (Theorem \ref{thm: main''}). Next, we reduce Theorem \ref{thm: main''} to Proposition \ref{prop: rec.law}.

Let $\omega_1, \dots, \omega_g$ be a basis of holomorphic $1$-forms on $C$ satisfying
\[
\int_{\gamma_{\nu}} \omega_i = \delta_{i \nu} \quad (1 \leq i, \nu \leq g).
\]
We set
\[
\Omega = (I, Z) = \left( \int_{\gamma_k} \omega_j \right)_{\substack{1\leq j \leq g \\ 1\leq k \leq 2g}}.
\]
By Riemann's bilinear relations, $Z$ is a symmetric $g \times g$ matrix with positive-definite imaginary part.
We define
\[
\mathrm{Jac}(C) \coloneqq \C^g/\Omega\Z^{2g}.
\]
Let $u \colon \mathrm{Pic}^0(C) \to \mathrm{Jac}(C)$ denote the Abel--Jacobi map. For an element $[D] = \left[\sum_k n_k p_k\right]$ of $\mathrm{Pic}^0(C)$, where $p_k \in C$ and $\sum_k n_k = 0$, we have
\begin{equation}\label{eq:AJmap-u}
u([D]) = \left(\sum_k n_k\int_p^{p_k}\omega_i\right)_{i=1,\dots,g} \bmod \Omega\Z^{2g},
\end{equation}
where $p$ is a component of the chosen base point $p+q$ of $V_b$.
Let $\kappa_p \in \mathrm{Jac}(C)$ denote the Riemann constant with respect to the base point $p$.
Theorem \ref{thm: main'} (2) can be reformulated as follows.
\begin{theorem}\label{thm: main''}
Under the Abel--Jacobi map, the image of the extension class is given by
\[
u(\psi_2) = \left( 2\int_p^{q'} \omega_i + (2g-2)\int_p^b \omega_i \right)_i + 2\kappa_p.
\]
\end{theorem}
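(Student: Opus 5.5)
The plan is to prove the statement in two stages. First, I check that it is equivalent to Theorem \ref{thm: main'} (2). Then I compute $u(\psi_2)$ directly from Hain's formula \eqref{eq:Psi-mpq}.

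\emph{Equivalence with Theorem \ref{thm: main'} (2).} I apply $u$ to $p'+q'-p-q+(2g-2)b-K_C$, with base point $p$ as in \eqref{eq:AJmap-u}. Since $p+p'$ and $q+q'$ both lie in the hyperelliptic pencil, we have $p+p'\sim q+q'$. Hence $p'-q\sim q'-p$, and so $p'+q'-p-q\sim 2q'-2p$, whose image under $u$ is $2\int_p^{q'}\omega_i$. For the remaining term I write $(2g-2)b-K_C=(2g-2)(b-p)-(K_C-(2g-2)p)$. Riemann's theorem on the theta divisor, with the convention $u(K_C-(2g-2)p)=-2\kappa_p$, then gives the contribution $(2g-2)\int_p^b\omega_i+2\kappa_p$. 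Adding the two contributions yields exactly the claimed formula.

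\emph{Computing $\psi_2$.} By the definition of $\tilde\Psi$, $\psi_2$ is represented by $\tilde\psi_2(v_0)\in H^1(C,\C)$, taken modulo $H^1(C,\Z)+F^1$. Under the polarization, its image in $\mathrm{Jac}(C)$ is obtained by pairing with $\omega_1,\dots,\omega_g$. To evaluate $\tilde\psi_2(v_0)$ I need a log form $\eta$ with $\sum_i (w_i\wedge w_{g+i}-w_{g+i}\wedge w_i)+d\eta=0$. Lemma \ref{prop: cal of cup} says the class of the left-hand wedge sum is $(2g-2)[C]+2[L]$, so $\eta$ must have logarithmic singularities along $C_b$ and $L$ with residues $2g-2$ and $2$. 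I pull everything back by $j_q$, using \eqref{eq: pullback.iter.int}. The curve $C_q$ meets $C_b$ at $b+q$ and meets $L$ at $q'+q$. Therefore $j_q^*\eta$ is a $1$-form on $C\setminus\{b,q'\}$ with $d(j_q^*\eta)=-2\sum_i x_i\wedge x_{g+i}$ and with residues $2g-2$ at $b$ and $2$ at $q'$, up to sign. The $k$-th coefficient becomes
\[
\int_{\gamma_k}\Bigl(\sum_{i=1}^g (x_ix_{g+i}-x_{g+i}x_i)+j_q^*\eta\Bigr),
\]
an iterated integral on the two-punctured curve. One subtlety: the loops $\gamma_k$ are based at $p$, because the base point $p+q$ of $V_b$ equals $j_q(p)$. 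This is why $p$ becomes the base point of the Abel--Jacobi map.

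\emph{Main step and obstacle.} The hardest part is to evaluate the pairing $\sum_k(\cdots)\int_{\gamma_k}\omega_j$ of these iterated integrals with the holomorphic forms. The goal is to show that it equals $(2g-2)\int_p^b\omega_j+2\int_p^{q'}\omega_j$ plus a term that does not depend on the punctures. I would first prove a two-punctured higher reciprocity law. The approach is to cut $C$ along the $\gamma_k$ into a $4g$-gon and integrate $f\,\omega_j\,j_q^*\eta$, where $f=\int\omega_j$ is a primitive. Stokes' theorem then expresses the boundary sum through the residues of $j_q^*\eta$ at $b$ and $q'$, which produces the $\int_p^b$ and $\int_p^{q'}$ terms weighted by $2g-2$ and $2$, together with a residue-free bulk term. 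Finally I would identify the bulk term with $2\kappa_p$ using Kaenders' higher period relation from the one-punctured case. Checking the normalizations and signs here, so that the answer agrees with the convention $u(K_C)=-2\kappa_p$ used above, is where I expect most of the difficulty.
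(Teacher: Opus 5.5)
Your plan follows the paper's architecture. You rewrite $u(\psi_2)$ as the pairing \eqref{eq: ext.hom.}, prove a reciprocity law on $C\setminus\{b,q'\}$ in which $b$ and $q'$ enter with weights $2g-2$ and $2$, and absorb the rest into $2\kappa_p$ via Kaenders. Your first paragraph, the equivalence with Theorem \ref{thm: main'}~(2), is the paper's deduction run backwards and is not needed for this statement. Two sub-steps are done differently.

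\emph{Residues.} The paper (Lemma \ref{lem:res-explicit}) gets the sum $2g$ from Stokes, and the ratio $g-1$ from Collino's relation \eqref{eq: Collinoeq} together with $\sigma_L-1\in J^2$. You read both residues off the full class $(2g-2)[C]+2[L]$ of Lemma \ref{prop: cal of cup}. The ingredients are the current identity $[\Theta]=\sum_j 2\pi\sqrt{-1}\,\mathrm{Res}_{D_j}(\eta_b)[D_j]$ for $d\eta_b=-\Theta$, the independence of $[C]$ and $[L]$ (Lemma \ref{prop: wht1}), and the transversality of $C_q$ with $C_b$ and with $L$ (intersection number $1$ at a single point in each case). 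This is correct and uses no input from $\pi_1(V_b)$; it exploits Lemma \ref{prop: cal of cup} more fully than the paper does. The identity also fixes the signs you leave open: both are positive, consistent with \eqref{eq: res_sum}.

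\emph{Reciprocity.} The paper pairs the homotopy functional $I$ with the surface relation modulo $J^4$, so Kaenders' computation outputs the remainder verbatim as the $\Pi$-expression in \eqref{eq: high.pr.rel.}. In your Stokes version, the form to integrate over $\partial\Delta$ (minus small discs) is $f\,\xi_{b,q}$ with $f=\int_p\omega_j$, not a product involving $\omega_j$. The side pairings then give the $\xi$-part of the left side of \eqref{eq: rec.law}. The small circles give $(2g-2)\int_p^b\omega_j+2\int_p^{q'}\omega_j$. The bulk term is $\int_\Delta(\omega_j\wedge\xi_{b,q}+f\,d\xi_{b,q})$. The $\int v_0$-part and $\int_\Delta f\,d\xi_{b,q}$ do not involve $b$ or $q'$. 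The term $\int_\Delta\omega_j\wedge\xi_{b,q}$ does, and it drops out only when $\xi_{b,q}$ has type $(1,0)$. So you must choose $\eta_b$ in $F^1$; Hain's recipe requires this anyway, and Lemma \ref{lem:I-homotopy-functional} tacitly needs it too.

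Your final step is the vague one, because the remainder is not visibly Kaenders' $\Pi$-expression. The efficient fix is a comparison. Take $q_0$ inside $\Delta$, the same loops and forms, and a $(1,0)$ log form $\mu_{q_0}$ with $d\mu_{q_0}=d\xi_{b,q}$. Then $\xi_{b,q}-\mu_{q_0}$ is a differential of the third kind, so the classical reciprocity law gives the change
\[
(2g-2)\int_p^b\omega_j+2\int_p^{q'}\omega_j-2g\int_p^{q_0}\omega_j .
\]
Kaenders' one-punctured reciprocity law together with \eqref{eq: high.pr.rel.} supplies $2g\int_p^{q_0}\omega_j+2\kappa_p$ for the $\mu_{q_0}$ side, and adding the two gives the claim.

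Your route isolates the dependence on the punctures transparently and elementarily. The paper's route stays within homotopy functionals and lands directly in the form of Kaenders' higher period relation.
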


We explain how Theorem \ref{thm: main'} (2) follows from Theorem \ref{thm: main''}.
\begin{proof}[(Proof of Theorem \ref{thm: main''} $\Rightarrow$ Theorem \ref{thm: main'} (2))]
Since $p+p' \sim q+q'$, we have $p'-q \sim q'-p$, and therefore
\[
p' + q' - p - q \sim 2(q'-p).
\]
By \eqref{eq:AJmap-u}, we have
\[
u\bigl(p' + q' - p - q + (2g-2)b - (2g-2)p\bigr)
= \left( 2\int_p^{q'} \omega_i + (2g-2)\int_p^b \omega_i \right)_i.
\]
Since we have
\begin{equation}\label{eq:riemann-constant}
u((2g-2)p - K_C) = 2\kappa_p
\end{equation}
(see, for example, \cite[p.~340]{GH}), we obtain
\[
u\bigl(p' + q' - p - q + (2g-2)b - K_C\bigr)
= \left( 2\int_p^{q'} \omega_i + (2g-2)\int_p^b \omega_i \right)_i + 2\kappa_p.
\]
Hence, Theorem \ref{thm: main''} implies
\[
u(\psi_2) = u\bigl(p' + q' - p - q + (2g-2)b - K_C\bigr).
\]
Since $u$ is an isomorphism, Theorem \ref{thm: main'} (2) follows.
\end{proof}

In the rest of this subsection, we reduce Theorem \ref{thm: main''} to Proposition \ref{prop: rec.law}.
We begin by expressing $\psi_2$ in terms of iterated integrals.
Recall from \S 3 that $x_i'$ is an element of $H^1(\symC,\Z)$ such that $j_a^*x_i' = x_i$. We choose a closed $1$-form on $\symC$ representing this class, and denote it again by $x_i'$.
Let $\eta_b$ be a $C^\infty$ $1$-form on $\symC$ with logarithmic singularities along $L + C_b$
such that
\begin{equation} \label{eq: eta}
2\sum_{i = 1}^g x_i' \wedge x_{g+i}' + d\eta_b = 0.
\end{equation}
Here the first term is a closed $2$-form on $\symC$ representing the class $\varphi(v_0')$ defined in Section~\ref{sec: Gr2}.
Set $\xi_{b,q} \coloneqq j_q^*\eta_b \in \mathcal{A}^1(C \log(b+q'))$. By \eqref{eq:Psi-mpq} and \eqref{eq: pullback.iter.int}, we have
\begin{equation*}
\tilde{\psi}_2
= \sum_{k=1}^{2g} \left( \int_{\gamma_k} \left( \sum_{i=1}^{g} (x_i x_{g+i} - x_{g+i} x_i) + \xi_{b,q} \right) \right)x_k.
\end{equation*}
We denote this iterated integral by
\begin{equation}\label{eq:psi2-concrete}
\tilde{\psi}_2 = \sum_{k=1}^{2g} \left( \int_{\gamma_k} v_0 + \xi_{b,q} \right) x_k.
\end{equation}

Next, for two functions $F, G \colon \pi_1(C\setminus\{b,q'\},p) \to \C$, we define
\[
\Pi(F; G) \coloneqq \sum_{\nu=1}^{g} \left( F(\gamma_{\nu}) G(\gamma_{g+\nu}) - F(\gamma_{g+\nu}) G(\gamma_{\nu}) \right).
\]
This is an analogue of Kaenders' symbol \cite{Kae}, who introduced this symbol for one-punctured curves.
With this notation, we can state a higher reciprocity law on $C \setminus \{b, q'\}$.

\begin{proposition}\label{prop: rec.law}
For a holomorphic $1$-form $\omega$ on $C$, the following equality holds modulo $\Z\int_{\gamma_1}\omega + \cdots + \Z\int_{\gamma_{2g}}\omega$:
\begin{equation}\label{eq: rec.law}
\begin{split}
 & \sum_{\nu=1}^{g} \left( \int_{\gamma_{\nu}}{\omega} \cdot \int_{\gamma_{g+\nu}}{ (v_0+\xi_{b,q})}
-\int_{\gamma_{g+\nu}} {\omega} \cdot \int_{\gamma_{\nu}}{(v_0+ \xi_{b,q})} \right) \\
&\equiv 2\int_p^{q'} {\omega} + (2g-2)\int_p^b {\omega} \\
&\quad + \sum_{j,k=1}^{g} a_{jk} \Biggl\{ -2\Pi \left( \int \omega \omega_j ; \int \bar{\omega}_k \right) + 2 \Pi\left( \int{\omega} \int{\bar{\omega}_k} ; 
\int{\omega_j} \right) \\
&\quad +\Pi\left( \int \omega ; \int \omega_j \int \bar{\omega}_k \right) \Biggr\},
\end{split}
\end{equation}
where $(a_{jk})_{j,k} = (\bar{Z}-Z)^{-1}$.
\end{proposition}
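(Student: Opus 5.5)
The plan is to prove \eqref{eq: rec.law} by the classical Stokes argument on the fundamental polygon, in the form Kaenders uses for one-punctured curves, now with two punctures $b$ and $q'$.

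\textbf{Setup.} Cut $C$ along loops based at $p$ representing $\gamma_1,\dots,\gamma_{2g}$ (chosen to avoid $b,q'$). This gives a $4g$-gon $\Delta$ with $b,q'$ in its interior. On $\Delta$ set
\[
F(x)=\int_p^x\omega,\qquad f_i(x)=\int_p^x x_i,\qquad G(x)=\int_p^x (v_0+\xi_{b,q}),
\]
the last being the length-two iterated integral along a path in $\Delta$. Since $\xi_{b,q}$ has logarithmic poles, $G$ is multivalued around $b$ and $q'$. To make it single valued, also cut $\Delta$ along arcs $\ell_b$ from $p$ to $b$ and $\ell_{q'}$ from $p$ to $q'$.

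\textbf{Step 1: residues of $\xi_{b,q}$.} The 1-form $\xi_{b,q}=j_q^*\eta_b$ has residue at $q'$ equal to the residue of $\eta_b$ along $L$, since $q+q'\in L$. Its residue at $b$ equals the residue along $C_b$. From \eqref{eq: eta} and Lemma~\ref{prop: cal of cup}, namely $\varphi(v_0')=(2g-2)[C]+2[L]$, these residues are $2$ along $L$ and $2g-2$ along $C_b$, after normalizing by $2\pi i$ as in Hain's log complex. Consequently $G$ jumps by $2$ (respectively $2g-2$), times $2\pi i$ in that normalization, across $\ell_{q'}$ (respectively $\ell_b$).

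\textbf{Step 2: Stokes' theorem.} Apply Stokes to $d(F\,dG)=\omega\wedge dG$ on the cut region, using
\[
dG=\sum_i\bigl(f_i\,x_{g+i}-f_{g+i}\,x_i\bigr)+\xi_{b,q}.
\]
The contributions are as follows.
\begin{enumerate}
\item[(a)] The paired sides $\gamma_\nu,\gamma_\nu^{-1}$ of $\partial\Delta$. Here $G$ transforms by $G(\gamma_\nu)$ plus cross terms $f_i(\gamma_\nu)\int x_{g+i}$, by the composition-of-paths formula for iterated integrals. These give $\Pi(\int\omega;\int(v_0+\xi_{b,q}))$, which is the left-hand side of \eqref{eq: rec.law}, plus correction terms of the shape $\Pi(\int\omega\int x;\int x)$.
\item[(b)] The two slits $\ell_{q'}$ and $\ell_b$. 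By Step 1, these contribute $2\int_p^{q'}\omega+(2g-2)\int_p^b\omega$. The ambiguity in the choice of the slits changes this only by $\Z$-combinations of $\int_{\gamma_k}\omega$, which accounts for the congruence.
\item[(c)] The interior integral
\[
\int_\Delta \omega\wedge\Bigl(\sum_i (f_i x_{g+i}-f_{g+i}x_i)+\xi_{b,q}\Bigr).
\]
\end{enumerate}

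\textbf{Step 3: the interior term (c).} Write the $x_i$ as harmonic forms in terms of $\omega_j$ and $\bar\omega_k$, using the period matrix $(I,Z)$ and its conjugate. Inverting this change of basis is exactly what produces the coefficients $a_{jk}=((\bar Z-Z)^{-1})_{jk}$. Then $\omega\wedge x_{g+i}$ reduces to terms $\omega\wedge\bar\omega_k$, and the Riemann bilinear method (applying Stokes again to $\int\omega\int\bar\omega_k$ and similar primitives) converts each interior integral into boundary symbols. This yields
\[
-2\,\Pi\Bigl(\int\omega\omega_j;\int\bar\omega_k\Bigr),\qquad 2\,\Pi\Bigl(\int\omega\int\bar\omega_k;\int\omega_j\Bigr),\qquad \Pi\Bigl(\int\omega;\int\omega_j\int\bar\omega_k\Bigr).
\]

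\textbf{Main obstacle.} The step I expect to be hardest is the term $\int_\Delta\omega\wedge\xi_{b,q}$, which I must control despite the freedom in choosing $\eta_b$. To handle it, I would first show that the left-hand side of \eqref{eq: rec.law} is independent of that choice. Changing $\eta_b$ by a closed log form changes $\tilde\psi_2$ by an element of $F^0+H^1(C,\Z)$, and pairing against the holomorphic form $\omega$ kills the $F^0$-part modulo periods. I would then fix a convenient $\xi_{b,q}$: a form with $d\xi_{b,q}=-2\sum_i x_i\wedge x_{g+i}$ on $C\setminus\{b,q'\}$ and the prescribed residues, built from a Green-type logarithmic form with the desired residues plus a smooth correction of type $(0,1)$. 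With this choice $\omega\wedge\xi_{b,q}$ involves only $\omega\wedge\bar\omega_k$ terms, which can be absorbed into Step 3.

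Finally, I would collect the correction terms from Step 2(a) together with Step 3, checking the factors of $2$ against the normalization $2\sum_i x_i'\wedge x_{g+i}'$ in \eqref{eq: eta}. This should give \eqref{eq: rec.law}.
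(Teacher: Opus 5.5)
\textbf{The gap is in your treatment of the term you single out, $\int_\Delta\omega\wedge\xi_{b,q}$; your way of disposing of it fails.}

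It is not true that changing $\eta_b$ by an arbitrary closed log form $\theta$ changes $\tilde\psi_2$ by an element of $F^0+H^1(C,\Z)$. Such a $\theta$ has zero residues, but $[j_q^*\theta]\in H^1(C,\C)$ is arbitrary. The left-hand side of \eqref{eq: rec.law} then changes by $\Pi(\int\omega;\int j_q^*\theta)=\pm\int_C\omega\wedge j_q^*\theta$. This vanishes for holomorphic $j_q^*\theta$. But take $\theta$ to be $t$ times the conjugate of the holomorphic $1$-form on $\symC$ restricting to $\omega_k$, and take $\omega=\omega_k$. Then the change is $t$ times the nonzero number $\pm\int_C\omega_k\wedge\bar\omega_k$, which is not a period for general $t$.

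So replacing $\xi_{b,q}$ by a Green-type form plus a smooth $(0,1)$-correction changes the left-hand side by a non-period. The interior term it creates, $\sum_k\int_\Delta h_k\,\omega\wedge\bar\omega_k$ with arbitrary smooth $h_k$, cannot be absorbed into Step 3, since the three Kaenders symbols there have fixed coefficients.

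The missing idea is the Hodge-theoretic normalization. Since $v_0$ has type $(1,1)$, Hain's formula requires the lift $\int v_0+\eta_b$ to lie in $F^1$. This means harmonic $x_i'$ and $\eta_b$ of type $(1,0)$. Then $\xi_{b,q}=j_q^*\eta_b$ is of type $(1,0)$, so $\omega\wedge\xi_{b,q}\equiv 0$ pointwise. Your \emph{main obstacle} is therefore identically zero, and the only interior term is $\sum_{j,k}a_{jk}\int_\Delta\bigl(\int_p^x\omega_j\bigr)\,\omega\wedge\bar\omega_k$. The paper uses this normalization implicitly: the bar differential of $\omega\,\xi_{b,q}$ contains the length-one term $\omega\wedge\xi_{b,q}$, so Lemma \ref{lem:I-homotopy-functional} holds only because that term vanishes.

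\textbf{With that correction, your Stokes route is a valid alternative to the paper's argument.} The paper never cuts the surface. It pairs the length-$3$ homotopy functional $\int I$ with the relation $\prod_\nu[\gamma_\nu,\gamma_{g+\nu}]=\delta_b\delta_{q'}$, expanded modulo $J^4$.
\begin{itemize}
\item The length-$3$ part $\sum 2a_{jk}\,\omega\omega_j\bar\omega_k$ of $I$ plays the role of your interior integral and produces the Kaenders symbols. The paper defers that computation to Kaenders, much as you leave the coefficients $-2,2,1$ unchecked.
\item The meridian terms give $2\pi\sqrt{-1}\,\mathrm{Res}_b(\xi_{b,q})\int_p^b\omega$ and the analogous term at $q'$.
\item A minor bookkeeping point: with $\int F\,dG$, these residue contributions come from the small circles around $b$ and $q'$, not from the slits. $G$ jumps only by a constant across a slit, so $dG$ does not jump.
\end{itemize}

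\textbf{Your Step 1 is genuinely different from the paper and is correct.} The paper gets the sum of the residues by Stokes on $C$. It gets their ratio $g-1$ from Collino's relation \eqref{eq: Collinoeq} together with $\sigma_L-1\in J^2$. You instead read both residues off the identity $\varphi(v_0')=2\pi\sqrt{-1}\,\bigl(\mathrm{Res}_{C_b}(\eta_b)[C_b]+\mathrm{Res}_L(\eta_b)[L]\bigr)$ in $H^2(\symC,\C)$. This uses Lemma \ref{prop: cal of cup}, the linear independence of $[L]$ and $[C_b]$ from the proof of Lemma \ref{prop: wht1}, and the fact that $C_q$ meets $L$ and $C_b$ transversally in one point each. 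That argument avoids the fundamental-group input entirely.
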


We now explain how Theorem \ref{thm: main''} follows from Proposition \ref{prop: rec.law}.
\begin{proof}[(Proof of Proposition \ref{prop: rec.law} $\Rightarrow$ Theorem \ref{thm: main''})]
We first rewrite $u(\psi_2)$ in a form to which Proposition \ref{prop: rec.law} can be applied.
Let $\mathrm{PD}\colon H^1(C,\mathbb{Z}) \xrightarrow{\sim} H_1(C,\mathbb{Z})$ be the Poincar\'e duality isomorphism. It induces an isomorphism of abelian varieties
\[
\overline{\mathrm{PD}}\colon H^1(C,\mathbb{C})/(F^1H^1(C,\mathbb{C})+H^1(C,\mathbb{Z})) \xrightarrow{\sim} H_1(C,\mathbb{C})/(F^0H_1(C,\mathbb{C})+H_1(C,\mathbb{Z})).
\]
Via the standard isomorphism
\[
\mathrm{Pic}^0(C) \xrightarrow{\sim} H^1(C,\mathbb{C})/(F^1+H^1(C,\mathbb{Z})),
\]
the map $u$ can be expressed as follows (see, e.g., \cite{Kae}):
\[
u\colon H^1(C,\mathbb{C})/(F^1+H^1(C,\mathbb{Z})) \to \mathbb{C}^g/\Omega\mathbb{Z}^{2g}, \qquad
[\zeta] \mapsto \left(\int_{\overline{\mathrm{PD}}([\zeta])}\omega_i\right)_i.
\]
For the symplectic basis chosen above, we have
\[
\mathrm{PD}(x_{\nu}) = -\gamma_{g+\nu}, \qquad \mathrm{PD}(x_{g+\nu}) = \gamma_{\nu} \qquad (1 \le \nu \le g).
\]
Hence,
\[
u(x_k) =
\begin{cases}
-\left(\int_{\gamma_{g+k}} \omega_i\right)_i & (1 \le k \le g), \\
\left(\int_{\gamma_{k-g}} \omega_i\right)_i & (g+1 \le k \le 2g).
\end{cases}
\]
Combining this with \eqref{eq:psi2-concrete}, we obtain
\begin{equation}\label{eq: ext.hom.}
u(\psi_2) = \left( \sum_{\nu=1}^{g} \left( \int_{\gamma_{\nu}} \omega_i \cdot \int_{\gamma_{g+\nu}} (v_0+\xi_{b,q}) - \int_{\gamma_{g+\nu}} \omega_i \cdot \int_{\gamma_{\nu}} (v_0+ \xi_{b,q}) \right) \right)_i.
\end{equation}
On the other hand, the functions on $\pi_1(C \setminus \{b,q'\})$ appearing on the right-hand side of \eqref{eq: rec.law} actually descend to functions on $\pi_1(C)$, since the $1$-forms there are defined on $C$. Therefore we can apply the higher period relations of Kaenders \cite{Kae}, which say that
\begin{equation}\label{eq: high.pr.rel.}
\begin{split}
\Biggl( \sum_{j,k=1}^{g} a_{jk} \Biggl\{ -2\Pi \left( \int \omega_i \omega_j ; \int \bar{\omega}_k \right) 
+ 2 \Pi\left( \int{\omega_i} \int{\bar{\omega}_k} ; \int{\omega_j} \right) \\
+\Pi\left( \int \omega_i ; \int \omega_j \int \bar{\omega}_k \right) \Biggr\} \Biggr)_i \equiv 2\kappa_p \quad \pmod{\Omega \Z^{2g}}.
\end{split}
\end{equation}
Applying \eqref{eq: rec.law} to each component of \eqref{eq: ext.hom.} and combining it with \eqref{eq: high.pr.rel.}, we obtain Theorem \ref{thm: main''}.
\end{proof}

Thus, Theorem \ref{thm: main'} (2) is reduced to Proposition \ref{prop: rec.law}.

\subsection{Proof of Proposition \ref{prop: rec.law}} \label{sec: proof3}
{\pushQED{\qed}
In this subsection, we prove Proposition \ref{prop: rec.law}.
This is done in several steps.
We begin by defining the iterated integral $\int I$ of length $\le 3$ on $C \setminus \{b,q'\}$ by
\[
\int I \coloneqq \int \sum_{j,k=1}^{g} 2a_{jk}\omega \omega_j \bar{\omega}_k + \omega \xi_{b,q}.
\]
\begin{lemma}\label{lem:I-homotopy-functional}
The iterated integral $\int I$ is a homotopy functional on $C \setminus \{b,q'\}$.
\end{lemma}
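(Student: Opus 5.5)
We use Chen's integrability criterion. An iterated integral $\int \sum_{|I|\le 3} a_I w_{i_1}\cdots w_{i_r}$ is a homotopy functional exactly when the formal differential
\[
D(w_1\cdots w_r)=\sum_{l}\pm\, w_1\cdots dw_l\cdots w_r+\sum_l \pm\, w_1\cdots (w_l\wedge w_{l+1})\cdots w_r
\]
vanishes in the reduced bar construction. The plan is to compute $D$ of the length-three part and of the length-two part of $I$, and check that all terms cancel.

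\emph{Length-three part.} Since $\omega,\omega_j,\bar\omega_k$ are closed on $C$, the only terms in $D(\omega\omega_j\bar\omega_k)$ are
\[
-(\omega\wedge\omega_j)\,\bar\omega_k-\omega\,(\omega_j\wedge\bar\omega_k),
\]
up to the usual sign convention. The first term vanishes because $\omega\wedge\omega_j$ is a $(2,0)$-form on a curve. So $D$ of the length-three part equals
\[
-\,\omega\Bigl(\sum_{j,k}2a_{jk}\,\omega_j\wedge\bar\omega_k\Bigr).
\]

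\emph{Length-two part.} For $\omega\,\xi_{b,q}$ we get
\[
D(\omega\,\xi_{b,q})=-\,\omega\, d\xi_{b,q}-\omega\wedge\xi_{b,q}.
\]
Here $\omega\wedge\xi_{b,q}$ is a $2$-form on a curve, hence closed. Length-one terms are trivially integrable and cause no issue. The condition therefore becomes
\[
d\xi_{b,q}+\sum_{j,k}2a_{jk}\,\omega_j\wedge\bar\omega_k=0 \quad\text{on } C\setminus\{b,q'\},
\]
which is the key identity to establish.

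\emph{Proving the key identity.} Pulling back \eqref{eq: eta} by $j_q$ gives
\[
d\xi_{b,q}=-2\sum_i j_q^*x_i'\wedge j_q^*x_{g+i}'.
\]
The right-hand side is $-2$ times a $2$-form representing $\sum_i x_i\cup x_{g+i}$, that is, $g[\mathrm{pt}]$, away from the punctures. The freedom in the choice of representatives is the main obstacle. The closed forms $x_i'$ were arbitrary, and the combination $\sum 2a_{jk}\omega_j\wedge\bar\omega_k$ represents a definite multiple of the class of $C$ (by the Riemann bilinear relations, $(\bar Z-Z)^{-1}$ inverts the intersection pairing of $\omega_j$ with $\bar\omega_k$). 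So the two $2$-forms agree only in cohomology, and possibly only up to a sign we must track.

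To handle this, choose the representatives $x_i'$ on $\symC$ to be harmonic. Concretely, write them as real and imaginary parts of the holomorphic forms $\omega_j'$ on $\symC$ with $j_a^*\omega_j'=\omega_j$. Then
\[
\sum_i x_i'\wedge x_{g+i}'=-\sum_{j,k}a_{jk}\,\omega_j'\wedge\bar\omega_k'
\]
holds identically, by the same linear-algebra change of basis that relates the real symplectic basis to $\omega_j,\bar\omega_k$. With this choice, pulling back by $j_q$ gives exactly
\[
d\xi_{b,q}=-\sum_{j,k}2a_{jk}\,\omega_j\wedge\bar\omega_k
\]
as forms, which is the key identity.

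If the $x_i'$ are fixed differently, they change by exact forms $df_i$. This changes $\eta_b$ by an explicit $1$-form, and hence changes $\xi_{b,q}$ accordingly. We would then absorb the modification into the statement, arguing that it affects neither the homotopy-invariance check nor the period class. We expect this bookkeeping of the representative choice and of the signs in $D$ to be the main difficulty. The cancellation itself is a short computation.
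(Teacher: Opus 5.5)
Your reduction is the paper's. Chen's integrability condition comes down to $d\xi_{b,q}+2\sum_{j,k}a_{jk}\,\omega_j\wedge\bar\omega_k=0$, obtained by pulling back \eqref{eq: eta} with harmonic $x_i'$. The paper gets the same identity \eqref{eq: xi} by wedging \eqref{eq:v0-decomp}, which tacitly presupposes that choice, so making it explicit is good.

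\textbf{The gap.} The step ``$\omega\wedge\xi_{b,q}$ is a $2$-form on a curve, hence closed; length-one terms cause no issue'' is wrong. The condition is $D(I)=0$ in the reduced bar construction. Since $I$ has no length-one part, the length-one component of $D(I)$ is exactly $\pm[\omega\wedge\xi_{b,q}]$, and it must vanish as a form. Closedness is irrelevant: compare $\int dx\,dy$ on $\mathbb{R}^2$, whose value on a loop is a signed area.

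In fact the key identity alone cannot give the lemma. Let $F$ be a bump function on $\symC$ supported near a point of $C_q$, away from $L+C_b$ and from $p+q$. Replacing $\eta_b$ by $\eta_b+dF$ keeps \eqref{eq: eta} and the key identity. But on loops at $p$ it changes $\int_\gamma I$ by $\int_\gamma\omega\,df=-\int_\gamma f\omega$, where $f=j_q^*F$. This is not homotopy invariant once $df\wedge\omega\not\equiv 0$: apply Stokes on a small disc. So your argument, as written, proves too much.

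\textbf{The missing ingredient} is a type condition: $\xi_{b,q}$ must be of type $(1,0)$, so take $\eta_b\in F^1\mathcal{A}^1(\symC\log(L+C_b))$.
\begin{itemize}
\item This choice is possible. The form $2\sum_i x_i'\wedge x_{g+i}'=2\sum_{j,k}a_{jk}\,\omega_j'\wedge\bar\omega_k'$ is of type $(1,1)$, hence lies in $F^1$. It is exact in the log complex by Lemma~\ref{prop: cal of cup}, and $d$ is strict for $F$ there.
\item It is also the natural choice in Hain's formula \eqref{eq:Psi-mpq}, where the lift of $v_0\in F^1K_0$ is taken in $F^1$. The paper's one-line appeal to Hain's remark needs it too, as Kaenders' $\mu_q$ does.
\item With this choice, $\omega\wedge\xi_{b,q}$ is a $(2,0)$-form on a curve and vanishes. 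This is the same argument you already used for $\omega\wedge\omega_j$.
\end{itemize}

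\textbf{Two smaller points.}
\begin{itemize}
\item The correct identity is $\sum_i x_i'\wedge x_{g+i}'=+\sum_{j,k}a_{jk}\,\omega_j'\wedge\bar\omega_k'$; both sides integrate to $g$ over $C_a$. With your minus sign, the pullback would give the opposite sign for $d\xi_{b,q}$.
\item Drop your last paragraph. With non-harmonic $x_i'$ the length-two part of $D(I)$ no longer cancels against the $\omega_j,\bar\omega_k$ terms. The harmonic choice is part of the setup, not bookkeeping to be absorbed later.
\end{itemize}
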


\begin{proof}
By a standard computation, the element $v_0$ can be written as
\begin{equation}\label{eq:v0-decomp}
v_0 = \sum_{i=1}^{g} (x_i \otimes x_{g+i} - x_{g+i} \otimes x_i) = \sum_{j,k=1}^{g} (a_{jk} \omega_j \otimes \bar{\omega}_k + \bar{a}_{jk} \bar{\omega}_j \otimes \omega_k).
\end{equation}
Therefore, pulling back \eqref{eq: eta} via $j_q$ to $C \setminus \{b,q'\}$, we obtain
\begin{equation} \label{eq: xi}
\sum_{i=1}^{g} 2 x_i \wedge x_{g+i} + d\xi_{b,q} = \sum_{j,k=1}^{g} 2a_{jk}\omega_j \wedge \bar{\omega}_k + d\xi_{b,q} = 0.
\end{equation}
By \cite[Remark~4.5]{Hain}, this implies that $\int I$ is a homotopy functional.
\end{proof}

Next, we consider the elements $c_i \coloneqq (\gamma_i-1)$, $d_b \coloneqq (\delta_{b}-1)$, and $d_{q'} \coloneqq (\delta_{q'}-1)$ of the augmentation ideal $J$ of $\Z\pi_1(C\setminus \{b,q'\},p)$, where $\delta_b$ and $\delta_{q'}$ denote meridian loops based at $p$ around $b$ and $q'$, respectively.
\begin{lemma}\label{lem:modJ4-relation}
In $\Z\pi_1(C\setminus \{b,q'\},p)$, we have
\begin{equation}\label{eq: modJ^4}
\begin{aligned}
\sum_{\nu=1}^{g} \bigl\{ &(c_{\nu}c_{g+\nu} - c_{g+\nu}c_{\nu}) + (c_{g+\nu}c_{\nu}c_{g+\nu} - c_{\nu}c_{g+\nu}c_{\nu}) \\
\qquad &- (c_{\nu}c_{g+\nu}c_{g+\nu} - c_{g+\nu}c_{\nu}c_{\nu}) \bigr\} 
\equiv d_b + d_{q'} + d_bd_{q'} \pmod{J^4}.
\end{aligned}
\end{equation}
\end{lemma}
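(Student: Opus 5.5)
The plan is to expand the single defining relation of the fundamental group of the twice-punctured curve in the group ring modulo $J^4$. The loops $\gamma_1,\dots,\gamma_{2g}$ based at $p$ and the meridians $\delta_b,\delta_{q'}$ are chosen so that $\pi_1(C\setminus\{b,q'\},p)$ has the standard presentation with the single relation
\[
\prod_{\nu=1}^{g}[\gamma_\nu,\gamma_{g+\nu}]=\delta_b\,\delta_{q'}, \qquad [x,y]=xyx^{-1}y^{-1}.
\]
Here the orientation and ordering of the meridians (and the order of the commutator factors) are fixed so that the relation holds exactly in this form; this convention is what is implicitly meant by ``meridian loops'' in the statement. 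The right-hand side gives exactly
\[
\delta_b\delta_{q'}-1=(1+d_b)(1+d_{q'})-1=d_b+d_{q'}+d_bd_{q'},
\]
with no truncation needed. So the whole content is computing the left-hand side modulo $J^4$.

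First step: a commutator expansion. For $x=1+a$, $y=1+b$ with $a,b\in J$, put $u=yx-1=a+b+ba$ and $w=ab-ba$. Then $xy=1+u+w$, so
\[
[x,y]=xy\,(yx)^{-1}=(1+u+w)(1+u)^{-1}=1+w(1+u)^{-1}.
\]
Since $w\in J^2$ and $u\in J$, we have $w(1+u)^{-1}\equiv w-wu\equiv w-w(a+b)\pmod{J^4}$. Expanding $w(a+b)=aba+abb-baa-bab$ gives
\[
[x,y]-1\equiv (ab-ba)+(bab-aba)-(abb-baa)\pmod{J^4}.
\]
With $a=c_\nu$ and $b=c_{g+\nu}$, this is precisely the $\nu$-th summand on the left of \eqref{eq: modJ^4}.

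Second step: the product. Each $e_\nu:=[\gamma_\nu,\gamma_{g+\nu}]-1$ lies in $J^2$, so any product $e_\nu e_\mu$ lies in $J^4$. Hence
\[
\prod_\nu(1+e_\nu)\equiv 1+\sum_\nu e_\nu \pmod{J^4}.
\]
Substituting the first step and comparing with the expansion of $\delta_b\delta_{q'}-1$ yields \eqref{eq: modJ^4}.

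The algebra is routine. I expect the only real care point to be the conventions: the orientation of the $\gamma_k$ must be compatible with $\int_{\gamma_k}x_i=\delta_{ik}$ and the symplectic normalization, and the choice of meridian representatives must make the surface relation come out as $\delta_b\delta_{q'}$ rather than an inverse or a reordered product. I would fix these first by drawing the standard $4g$-gon with two punctures and reading off the boundary word.
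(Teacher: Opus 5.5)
Your proposal is correct and follows the paper's proof: both expand the surface relation $\prod_{\nu}[\gamma_\nu,\gamma_{g+\nu}]=\delta_b\delta_{q'}$ in the group ring modulo $J^4$ and use the exact identity $\delta_b\delta_{q'}-1=d_b+d_{q'}+d_bd_{q'}$. The one difference is that you derive the commutator expansion yourself, where the paper simply cites Kaenders \S 1.3.1; your convention $[x,y]=xyx^{-1}y^{-1}$ is the one that produces exactly the stated terms.
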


\begin{proof}
Consider the following relation in $\pi_1(C \setminus \{b, q'\}, p)$:
\[
[\gamma_1, \gamma_{g+1}] \cdots [\gamma_g, \gamma_{2g}] = \delta_{b} \delta_{q'}
\]
(cf.~\cite[(1.4)]{Kae}).
By \cite[\S 1.3.1]{Kae}, we have
\begin{multline*}
[\gamma_1, \gamma_{g+1}] \cdots [\gamma_g, \gamma_{2g}] -1 \\
\equiv \sum_{\nu=1}^{g} \bigl\{ (c_{\nu}c_{g+\nu} - c_{g+\nu}c_{\nu}) + (c_{g+\nu}c_{\nu}c_{g+\nu} - c_{\nu}c_{g+\nu}c_{\nu}) - (c_{\nu}c_{g+\nu}c_{g+\nu} - c_{g+\nu}c_{\nu}c_{\nu}) \bigr\} \pmod{J^4}.
\end{multline*}
On the other hand, we compute
\[
\delta_b\delta_{q'} - 1 =(1+d_b)(1+d_{q'}) - 1 \equiv d_b+d_{q'}+d_bd_{q'} \pmod{J^4}.
\]
Comparing these two expressions proves \eqref{eq: modJ^4}.
\end{proof}

Since the iterated integral $I$ has length $\le 3$, pairing with $I$ gives the same value on both sides of \eqref{eq: modJ^4}.
We evaluate the two sides separately.

First, we evaluate $\int I$ on the left-hand side of \eqref{eq: modJ^4}.

\begin{lemma}[cf.~\cite{Kae}]\label{lem:LHSofmodJ^4}
We have
\begin{equation}\label{eq:pairing-LHS-modJ4}
\begin{aligned}
&\left\langle I, \text{LHS of \eqref{eq: modJ^4}}\right\rangle \\
&= \sum_{\nu=1}^{g} \left( \int_{\gamma_{\nu}}{\omega} \cdot \int_{\gamma_{g+\nu}}{ (v_0+\xi_{b,q})}
-\int_{\gamma_{g+\nu}} {\omega} \cdot \int_{\gamma_{\nu}}{(v_0+ \xi_{b,q})} \right) \\
&\quad - \sum_{j,k=1}^{g} a_{jk} \Biggl\{ -2\Pi \left( \int \omega \omega_j ; \int \bar{\omega}_k \right) + 2 \Pi\left( \int{\omega} \int{\bar{\omega}_k} ; \int{\omega_j} \right)\\
&\qquad + \Pi\left( \int \omega ; \int \omega_j \int \bar{\omega}_k \right) \Biggr\}.
\end{aligned}
\end{equation}
\end{lemma}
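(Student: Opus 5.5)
The plan is to expand the pairing term by term using the standard rules for evaluating iterated integrals on products of augmentation-ideal elements. If $\int w_1\cdots w_r$ is an iterated integral of 1-forms, then for loops $\alpha_1,\dots,\alpha_m$
\[
\Bigl\langle \int w_1\cdots w_r,\ (\alpha_1-1)\cdots(\alpha_m-1)\Bigr\rangle
=\sum \prod_{s=1}^{m}\int_{\alpha_s} w_{i_{s-1}+1}\cdots w_{i_s},
\]
where the sum runs over all decompositions $0=i_0<i_1<\dots<i_m=r$ into nonempty consecutive blocks. In particular the pairing vanishes when $m>r$, and for $m=r$ it is the product of the single integrals. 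We apply this to
\[
I=\sum_{j,k}2a_{jk}\,\omega\omega_j\bar\omega_k+\omega\xi_{b,q},
\]
whose length-$3$ part is $\sum 2a_{jk}\omega\omega_j\bar\omega_k$ and whose length-$2$ part is $\omega\xi_{b,q}$.

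First, the length-$2$ products $c_\nu c_{g+\nu}-c_{g+\nu}c_\nu$. The length-$3$ part of $I$ splits into block patterns $(1,2)$ and $(2,1)$. The term $\omega\xi_{b,q}$ contributes $\int_{\gamma_\nu}\omega\int_{\gamma_{g+\nu}}\xi_{b,q}$. Antisymmetrizing and summing over $\nu$ gives:
\begin{itemize}
\item from the $(1,2)$ pieces, $\sum_{j,k}2a_{jk}\,\Pi\bigl(\int\omega;\int\omega_j\bar\omega_k\bigr)$;
\item from the $(2,1)$ pieces, $\sum 2a_{jk}\,\Pi\bigl(\int\omega\omega_j;\int\bar\omega_k\bigr)$;
\item from the $\xi$ term, $\Pi\bigl(\int\omega;\int\xi_{b,q}\bigr)$.
\end{itemize}

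Second, the cubic products, where only the length-$3$ part survives and each term is a triple product of periods. We convert these into the shapes appearing in the statement using the shuffle relation on $C$:
\[
\int_\gamma\omega_j\bar\omega_k+\int_\gamma\bar\omega_k\omega_j=\int_\gamma\omega_j\int_\gamma\bar\omega_k .
\]
Concretely, $c_{g+\nu}c_\nu c_{g+\nu}-c_\nu c_{g+\nu}c_{g+\nu}$ pairs to
\[
\Bigl(\int_{\gamma_{g+\nu}}\omega\int_{\gamma_\nu}\omega_j-\int_{\gamma_\nu}\omega\int_{\gamma_{g+\nu}}\omega_j\Bigr)\int_{\gamma_{g+\nu}}\bar\omega_k ,
\]
and the other pair gives the analogous expression. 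Expressing these via $\Pi$ should produce $-\Pi\bigl(\int\omega\int\bar\omega_k;\int\omega_j\bigr)$-type terms and $\Pi\bigl(\int\omega;\int\omega_j\int\bar\omega_k\bigr)$-type terms, the latter rewritten through the shuffle identity above as a combination of $\int\omega_j\bar\omega_k$ and $\int\bar\omega_k\omega_j$.

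Third, we repackage the $\xi$ terms and the length-$2$ part of $v_0$ using the decomposition \eqref{eq:v0-decomp}, $v_0=\sum(a_{jk}\omega_j\bar\omega_k+\bar a_{jk}\bar\omega_j\omega_k)$. The target first line of \eqref{eq:pairing-LHS-modJ4} is exactly $\Pi\bigl(\int\omega;\int(v_0+\xi_{b,q})\bigr)$. So we must show that the combination
\[
2a_{jk}\,\Pi\bigl(\int\omega;\int\omega_j\bar\omega_k\bigr)+(\text{cubic }\Pi\text{ terms})
\]
equals $\Pi\bigl(\int\omega;\int v_0\bigr)$ minus the bracketed expression in the statement. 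We rewrite $\int\bar a_{jk}\bar\omega_j\omega_k$ in $v_0$ by swapping indices, using $\bar a_{jk}=-a_{kj}$, since $(\bar Z-Z)^{-1}$ is skew-Hermitian with $\bar Z-Z$ symmetric and imaginary. The shuffle relation then turns $\int\bar\omega_k\omega_j$ into $\int\omega_j\int\bar\omega_k-\int\omega_j\bar\omega_k$. After this the $\omega_j\bar\omega_k$ terms should match with coefficient $2a_{jk}$ versus $a_{jk}$, and the leftover $\Pi\bigl(\int\omega;\int\omega_j\int\bar\omega_k\bigr)$ should combine with the cubic contribution to give coefficient $-1$.

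The main obstacle is the bookkeeping of signs and of factors of $2$ in this final matching. We should follow Kaenders' computation in \cite[\S1.3]{Kae}, of which this is the one-puncture analogue with $\xi_{b,q}$ added. The only new feature is the additive $\omega\xi_{b,q}$ term, which contributes only at the quadratic level and is linear, so we would first verify that Kaenders' identity holds verbatim for the length-$3$ part and then add the $\xi$ contribution.
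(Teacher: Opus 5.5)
Your proposal is correct and follows essentially the paper's argument: the paper simply invokes Kaenders' computation with $\mu_q$ replaced by $\xi_{b,q}$, and your expansion closes up exactly. The quadratic terms give $\sum_{j,k}2a_{jk}\bigl[\Pi(\int\omega;\int\omega_j\bar\omega_k)+\Pi(\int\omega\omega_j;\int\bar\omega_k)\bigr]+\Pi(\int\omega;\int\xi_{b,q})$, the cubic terms give $-\sum_{j,k}2a_{jk}\bigl[\Pi(\int\omega\int\bar\omega_k;\int\omega_j)+\Pi(\int\omega;\int\omega_j\int\bar\omega_k)\bigr]$, and $\int v_0=\sum_{j,k}a_{jk}\bigl(2\int\omega_j\bar\omega_k-\int\omega_j\int\bar\omega_k\bigr)$ follows from $\bar a_{kj}=-a_{jk}$ and the shuffle relation.

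There are two small slips. First, this is the two-puncture analogue of Kaenders' one-puncture computation, not the reverse. Second, the $(2,1)$ term $2a_{jk}\Pi(\int\omega\omega_j;\int\bar\omega_k)$ matches the first bracketed term directly, so it should not be left out of your ``combination''.
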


\begin{proof}
This is proved by the same calculation as in the first paragraph of the proof of Theorem~1.4 of Kaenders~\cite{Kae}. 
While Kaenders considers one-punctured curves, 
the same argument applies to the two-punctured curve $C \setminus \{b,q'\}$. 
The only necessary modification is to replace the form $\mu_q$ in~\cite{Kae} by $\xi_{b,q}$ here.
\end{proof}

Next, we evaluate $\int I$ on the right-hand side of \eqref{eq: modJ^4}.
\begin{lemma}\label{lem:dbdq-I-vanish}
We have
\[
\int_{d_bd_{q'}} I =0.
\]
\end{lemma}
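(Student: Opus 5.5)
We use Chen's coproduct formula for iterated integrals, extended linearly to the group ring. For a path composition, $\int_{\alpha\beta} w_1\cdots w_r=\sum_{s=0}^{r}\int_\alpha w_1\cdots w_s\int_\beta w_{s+1}\cdots w_r$. Expanding $d_bd_{q'}=\delta_b\delta_{q'}-\delta_b-\delta_{q'}+1$ and pairing with it gives, for $I=w_1\cdots w_r$,
\[
\int_{d_bd_{q'}} w_1\cdots w_r=\sum_{s=1}^{r-1}\int_{d_b}w_1\cdots w_s\int_{d_{q'}}w_{s+1}\cdots w_r .
\]
Only the splittings with both halves nonempty survive, because each of $d_b$ and $d_{q'}$ has augmentation zero.

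We apply this to the two kinds of terms in $I$.

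\textbf{The length-three terms $\omega\omega_j\bar\omega_k$.} Every splitting contains either $\int_{d_b}\omega$ or $\int_{d_{q'}}\bar\omega_k$ (for $s=1$ and $s=2$ respectively). Since $\omega$ and $\bar\omega_k$ are smooth closed forms on all of $C$, these are integrals over a loop null-homologous in $C$, so they vanish.

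\textbf{The length-two term $\omega\xi_{b,q}$.} The only splitting is $s=1$, which gives $\int_{\delta_b}\omega\cdot\int_{\delta_{q'}}\xi_{b,q}$. The first factor vanishes for the same reason, while the second factor is finite because $\xi_{b,q}$ has only logarithmic singularities and the loop avoids $q'$. Hence the product is $0$.

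The only point needing care is that the length-two term, whose second form $\xi_{b,q}$ is not closed, is still governed by the coproduct formula. This holds because Chen's path-multiplicativity is a purely formal identity for any $1$-forms, closedness not required. We must still pair with the full homotopy functional $I$, whose value on the element $d_bd_{q'}$ is well defined by Lemma \ref{lem:I-homotopy-functional}. Summing the two contributions gives $\int_{d_bd_{q'}}I=0$.
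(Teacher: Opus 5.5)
Your proof is correct and follows essentially the same route as the paper. The paper cites Hain's Proposition (2.9) for the splitting formula $\int_{d_bd_{q'}} w_1\cdots w_r=\sum_{s=1}^{r-1}\int_{d_b}w_1\cdots w_s\int_{d_{q'}}w_{s+1}\cdots w_r$, which you instead derive from Chen's path-multiplicativity. It then kills every term using $\int_{d_b}\omega=\int_{d_{q'}}\bar{\omega}_k=0$, exactly as you do for both the length-three and the length-two parts.
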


\begin{proof}
For the length $3$ part, by Proposition (2.9) in \cite{Hain}, we have
\[
\int_{d_bd_{q'}} \omega\,\omega_j\,\bar{\omega}_k
= \int_{d_b}\omega\,\omega_j\int_{d_{q'}}\bar{\omega}_k
+ \int_{d_b}\omega\int_{d_{q'}}\omega_j\,\bar{\omega}_k.
\]
Since $\omega$, $\omega_j$, and $\bar{\omega}_k$ are defined on $C$, their integrals over the meridian loops vanish, so in particular
\[
\int_{d_b}\omega = \int_{d_{q'}}\bar{\omega}_k =0.
\]
Hence the length $3$ part vanishes on $d_bd_{q'}$. For the length $2$ part, the same formula gives
\[
\int_{d_bd_{q'}} \omega\,\xi_{b,q}
= \int_{d_b}\omega\int_{d_{q'}}\xi_{b,q} =0.
\]
Therefore $\int_{d_bd_{q'}} I=0$.
\end{proof}

\begin{lemma}\label{lem:pairing-RHS-decomp}
We have the following identity modulo $\Z\int_{\gamma_1}\omega + \cdots + \Z\int_{\gamma_{2g}}\omega$:
\begin{equation}\label{eq:pairing-RHS-decomp}
\int_{d_b + d_{q'}} I
\equiv 2\pi\sqrt{-1}\mathrm{Res}_b(\xi_{b,q})\int_p^b \omega
+ 2\pi\sqrt{-1}\mathrm{Res}_{q'}(\xi_{b,q})\int_p^{q'} \omega.
\end{equation}
\end{lemma}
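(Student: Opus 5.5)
We evaluate $\int_{d_b} I$ and $\int_{d_{q'}} I$ one at a time with the composition-of-paths formula (Proposition (2.9) of \cite{Hain}, the same one used in Lemma \ref{lem:dbdq-I-vanish}), and we realize each meridian as a lasso. Write $\delta_b = \lambda\,\epsilon\,\lambda^{-1}$. Here $\lambda$ is a path in $C\setminus\{b,q'\}$ from $p$ to a point $b_\epsilon$ close to $b$, and $\epsilon$ is a small circle of radius $\epsilon$ about $b$. For $d_b = \delta_b - 1$, expanding the iterated integral along this product gives a sum of terms $\int_\lambda(\cdot)\int_\epsilon(\cdot)\int_{\lambda^{-1}}(\cdot)$ taken over the ways of splitting the word.

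First take the length-$3$ part $\sum 2a_{jk}\,\omega\omega_j\bar\omega_k$. All three forms are smooth on $C$. On a circle of radius $\epsilon$, any iterated integral of positive length built from them is $O(\epsilon)$. Splittings in which $\epsilon$ receives the empty word combine, via $\lambda\lambda^{-1}$ being trivial, to the value at the constant path, and that value is $0$ in positive length. Hence this part contributes $0$ as $\epsilon\to 0$. Because $I$ is a homotopy functional (Lemma \ref{lem:I-homotopy-functional}), $\int_{\delta_b} I$ does not depend on $\epsilon$, so it is legitimate to pass to the limit.

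Now take the length-$2$ part $\omega\,\xi_{b,q}$. Splitting along $\lambda\epsilon\lambda^{-1}$ yields the terms
\[
\int_\lambda\omega\int_\epsilon\xi + \int_\epsilon\omega\xi + \int_{\lambda^{-1}}\omega\int_\epsilon\xi\cdot 0 + \cdots .
\]
Carrying this out carefully, only $\int_\lambda \omega\cdot\int_\epsilon\xi_{b,q}$ and $\int_\epsilon \omega\,\xi_{b,q}$ are nonzero, up to the cancelling $\lambda,\lambda^{-1}$ cross terms $\int_\lambda\omega\int_{\lambda^{-1}}\xi + \int_\lambda\omega\xi + \int_{\lambda^{-1}}\omega\xi = 0$. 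Since $\xi_{b,q}$ has a logarithmic pole at $b$, we get $\int_\epsilon\xi_{b,q} \to 2\pi\sqrt{-1}\,\mathrm{Res}_b(\xi_{b,q})$. The term $\int_\epsilon\omega\,\xi_{b,q}$ equals $\int_\epsilon (f-f(b_\epsilon))\,\xi$ with $f$ a local primitive of $\omega$, which is $O(\epsilon)\cdot O(1)\to 0$. The length-$1$ part of $I$ is empty. The result is $\int_{d_b} I = 2\pi\sqrt{-1}\,\mathrm{Res}_b(\xi_{b,q})\int_\lambda\omega$, and $\int_\lambda\omega \to \int_p^b\omega$ along $\lambda$.

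The same argument at $q'$ gives the second term. The only ambiguity comes from the choice of the tails $\lambda$. Changing the homotopy class of $\lambda$ relative to $C$ alters $\int_p^b\omega$ by an element of $\Z\int_{\gamma_1}\omega+\cdots+\Z\int_{\gamma_{2g}}\omega$, and winding around the punctures changes nothing because $\omega$ is holomorphic on $C$. This accounts for the statement holding only modulo periods. We do not need integrality of the residues here, since the residues appear as fixed coefficients multiplying the path integral. The main obstacle is the careful bookkeeping in the path-composition expansion of the length-$2$ and length-$3$ terms, and in particular checking that the mixed $\lambda/\lambda^{-1}$ terms cancel, either directly or by invoking the homotopy-functional property on $\lambda\lambda^{-1}$.
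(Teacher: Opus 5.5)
Your argument is correct, but it takes a different route from the paper.

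\textbf{The paper's route.} The paper works on a simply connected coordinate chart $U_b$ that contains the whole loop $\delta_b$, including $p$. It writes $\xi_{b,q}=\alpha_b+\mathrm{Res}_b(\xi_{b,q})\,dz_b/(z_b-z_b(b))$ with $\alpha_b$ smooth on $U_b$. Since the logarithmic term is closed, replacing $\xi_{b,q}$ by $\alpha_b$ in $I$ still gives a homotopy functional on $U_b$. That functional vanishes on $\delta_b-1$ because $\delta_b$ is contractible in $U_b$. What remains is $\int_{\delta_b}\omega\,dz_b/(z_b-z_b(b))=2\pi\sqrt{-1}\int_p^b\omega$, which the paper quotes as a standard computation; it is Cauchy's formula applied to a primitive of $\omega$.

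\textbf{Your route, compared.} You keep $I$ intact and use its homotopy invariance only to shrink a lasso $\lambda\epsilon\lambda^{-1}$. You then do the asymptotics with Chen's composition formula, which in effect reproves the quoted identity. Your version only needs a small disk around $b$, not a chart containing $p$, at the price of limit bookkeeping. The paper's version is exact and shorter.

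\textbf{Two small points in your bookkeeping.}
\begin{enumerate}
\item In the expansion of $\int_{\lambda\epsilon\lambda^{-1}}\omega\,\xi_{b,q}$ there is no term $\int_{\lambda^{-1}}\omega\int_\epsilon\xi_{b,q}$, since $\omega$ always sits on an earlier segment than $\xi_{b,q}$. The term you mean is $\int_\epsilon\omega\cdot\int_{\lambda^{-1}}\xi_{b,q}$. Because $\int_{\lambda^{-1}}\xi_{b,q}$ grows like $\log\epsilon$, it matters that $\int_\epsilon\omega=0$ exactly, which holds since $\omega$ is closed on the disk. For the same reason, the individually divergent $\lambda,\lambda^{-1}$ terms must cancel exactly at fixed $\epsilon$ before you pass to the limit, which you do.
\item What you actually prove is the exact identity $\int_{d_b}I=2\pi\sqrt{-1}\,\mathrm{Res}_b(\xi_{b,q})\int_{\lambda_0}\omega$ for the limiting tail $\lambda_0$. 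The statement treats $\int_p^b\omega$ as defined only modulo periods, i.e.\ for an arbitrary path from $p$ to $b$. Under that reading, changing the path shifts the right-hand side by $2\pi\sqrt{-1}\,\mathrm{Res}_b(\xi_{b,q})$ times a period. So it does need $2\pi\sqrt{-1}\,\mathrm{Res}_b(\xi_{b,q})\in\Z$, and likewise at $q'$. This is only supplied afterwards by Lemma~\ref{lem:res-explicit}, and the paper glosses over it as well. Your remark that integrality is not needed is therefore right only for the fixed-path version.
\end{enumerate}
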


\begin{proof}
We choose local coordinates $(U_b,z_b)$ and $(U_{q'},z_{q'})$ of $C$ that contain representatives of $\delta_b$ and $\delta_{q'}$, respectively.
We can write locally
\begin{equation}\label{eq:local-xi-decomp}
\begin{aligned}
\xi_{b,q} &= \alpha_b + \mathrm{Res}_b(\xi_{b,q})\frac{dz_b}{z_b-z_b(b)}\quad\text{on }U_b, \\
\xi_{b,q} &= \alpha_{q'} + \mathrm{Res}_{q'}(\xi_{b,q})\frac{dz_{q'}}{z_{q'}-z_{q'}(q')}\quad\text{on }U_{q'}.
\end{aligned}
\end{equation}
where $\alpha_b$ and $\alpha_{q'}$ are $C^\infty$ $1$-forms on $U_b$ and $U_{q'}$, respectively.
Since the second term is $d$-closed, we see that the iterated integral
\[
\int \sum_{j,k=1}^{g} 2a_{jk}\,\omega\,\omega_j\,\bar{\omega}_k + \omega\alpha_b
\]
is a homotopy functional by the same calculation as the proof of Lemma \ref{lem:I-homotopy-functional}. 
Since this is defined over $U_b$, we have
\begin{equation}\label{eq:db-alpha-b-vanish}
\int_{d_b}\sum_{j,k=1}^{g} 2a_{jk}\,\omega\,\omega_j\,\bar{\omega}_k + \omega\alpha_b=0.
\end{equation}
Therefore, only the logarithmic term remains; namely,
\[
\int_{d_b} I
= \mathrm{Res}_b(\xi_{b,q})\int_{\delta_b} \omega\,\frac{dz_b}{z_b-z_b(b)}.
\]
Similarly, we have
\[
\int_{d_{q'}} I
= \mathrm{Res}_{q'}(\xi_{b,q})\int_{\delta_{q'}} \omega\,\frac{dz_{q'}}{z_{q'}-z_{q'}(q')}.
\]
By standard computations of iterated integrals on $U_b$ and $U_{q'}$ (cf.~\cite[p.~1276]{Kae}), we also have
\begin{equation}\label{eq:local-int-identities}
\begin{aligned}
\int_{\delta_b} \omega \,\frac{dz_b}{z_b-z_b(b)} &= 2\pi\sqrt{-1}\int_p^b \omega, \\
\int_{\delta_{q'}} \omega \, \frac{dz_{q'}}{z_{q'}-z_{q'}(q')} &= 2\pi\sqrt{-1}\int_p^{q'} \omega,
\end{aligned}
\end{equation}
where the integrals $\int_p^b \omega$ and $\int_p^{q'} \omega$ are well defined modulo
$\Z\int_{\gamma_1}\omega + \cdots + \Z\int_{\gamma_{2g}}\omega$.
This proves \eqref{eq:pairing-RHS-decomp}.
\end{proof}

Finally, we calculate the residues of $\xi_{b,q}$ at $b$ and $q'$.
\begin{lemma}\label{lem:res-explicit}
We have
\begin{equation}\label{eq:res-explicit}
\mathrm{Res}_b(\xi_{b,q}) = \frac{2g-2}{2\pi \sqrt{-1}}, \quad \mathrm{Res}_{q'}(\xi_{b,q}) = \frac{2}{2\pi \sqrt{-1}}.
\end{equation}
\end{lemma}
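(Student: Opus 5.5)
The residues of $\xi_{b,q}=j_q^*\eta_b$ at $b$ and $q'$ are governed by the residues of $\eta_b$ along the components $L$ and $C_b$. The plan is to compute the latter on $\symC$ from the Gysin/Chern class relation \eqref{eq: cup of v_0}, and then restrict along $j_q$.

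\textbf{Step 1: residues of $\eta_b$.} Since $\eta_b$ has logarithmic singularities along $L+C_b$, near a smooth point of $L$ (resp.\ $C_b$) with local equation $f=0$ we can write
\[
\eta_b = r_L\,\frac{df}{f} + (\text{smooth}) \quad (\text{resp. } r_{C}\,\frac{df}{f}+(\text{smooth})),
\]
and $r_L,r_C$ are constants because $d\eta_b$ is smooth. The standard current computation gives the following: for a closed log $1$-form, the current $d[\eta_b]$ equals $[d\eta_b] + 2\pi\sqrt{-1}\,(r_L\delta_L + r_C\delta_{C_b})$. Hence in $H^2(\symC,\C)$,
\[
[d\eta_b] = -2\pi\sqrt{-1}\,(r_L[L]+r_C[C]).
\]
By \eqref{eq: eta}, $[d\eta_b] = -2\varphi(v_0')\cdot\tfrac{1}{2}\cdot 2$ up to the convention that $2\sum x_i'\wedge x_{g+i}'$ represents $\varphi(v_0')$, which the paper states explicitly. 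So $[d\eta_b]=-\varphi(v_0') = -((2g-2)[C]+2[L])$ by Lemma \ref{prop: cal of cup}. Since $[L],[C]$ are linearly independent (proof of Lemma \ref{prop: wht1}), we get
\[
2\pi\sqrt{-1}\,r_C = 2g-2, \qquad 2\pi\sqrt{-1}\,r_L = 2.
\]
Equivalently, and perhaps more cleanly, one can pair both sides with test classes: integrate over $C_a$ for generic $a$, and over a suitable second class, using Stokes with small tubes around the divisor.

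\textbf{Step 2: restriction along $j_q$.} The curve $C_q$ meets $C_b$ at the point $b+q$ (i.e.\ $x=b$) and meets $L$ at $q'+q$ (i.e.\ $x=q'$). Near these points $j_q^*f$ vanishes to order equal to the local intersection multiplicity. Both intersections are transverse for generic $q$; in general $C_q\cdot C_b=1$ and $C_q\cdot L=1$, so each is a single transverse point. Then $j_q^*(df/f)=dz/z+(\text{smooth})$, so $\mathrm{Res}_b\xi_{b,q}=r_C$ and $\mathrm{Res}_{q'}\xi_{b,q}=r_L$. The hypotheses $q\neq b$ and $q'\neq p$ ensure that these are the only singularities and that they are distinct, except possibly when $q'=b$, which we exclude or treat by continuity.

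\textbf{Main obstacle.} The delicate point is the sign and $2\pi\sqrt{-1}$ normalization in Step 1. As a sanity check, I would instead apply Stokes directly on the curve. Integrating \eqref{eq: xi} over $C$ minus small discs gives
\[
\int_C 2\sum x_i\wedge x_{g+i} = 2g,
\]
which should equal $2\pi\sqrt{-1}(\mathrm{Res}_b+\mathrm{Res}_{q'})$. This is consistent with $(2g-2)+2$ and fixes the sign. It is then enough to separate the two residues, which is exactly what the global computation with the independent classes $[L],[C]$ does.
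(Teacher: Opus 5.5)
Your proposal is correct, but it separates the two residues by a different mechanism from the paper.

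\textbf{The paper's route.} It obtains only the sum $2\pi\sqrt{-1}\bigl(\mathrm{Res}_b(\xi_{b,q})+\mathrm{Res}_{q'}(\xi_{b,q})\bigr)=2g$ from Stokes on $C$, which is your sanity check. It gets the ratio $\mathrm{Res}_b=(g-1)\mathrm{Res}_{q'}$ from the fundamental group:
\begin{enumerate}
\item it writes $2\pi\sqrt{-1}$ times each residue as the value of the length $\le 2$ homotopy functional $\int\sum_i 2x_i'x_{g+i}'+\eta_b$ on the meridians $\sigma_{C_b}=(j_q)_*\delta_b$ and $\sigma_L=(j_q)_*\delta_{q'}$;
\item it then uses Collino's relation \eqref{eq: Collinoeq} together with $\sigma_L-1\in J^2$ to get $\sigma_{C_b}-1\equiv(g-1)(\sigma_L-1)\pmod{J^3}$.
\end{enumerate}

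\textbf{Your route.} You determine the residues $r_L,r_C$ of $\eta_b$ on $\symC$ itself from the residue-current identity $d[\eta_b]=[d\eta_b]+2\pi\sqrt{-1}(r_L\delta_L+r_C\delta_{C_b})$. This gives $\varphi(v_0')=2\pi\sqrt{-1}(r_L[L]+r_C[C])$, and you read off both values at once from Lemma \ref{prop: cal of cup} and the independence of $[L]$ and $[C]$.

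\textbf{Comparison.} Your argument is purely cohomological and uses only facts the paper has already proved. It does not need Collino's computation of $\pi_1(V_b)$. It also shows directly that the coefficients $2$ and $2g-2$ in Theorem \ref{thm: main''} are exactly the coefficients of $[L]$ and $[C]$ in $\varphi(v_0')$. The paper's argument avoids currents and stays inside the iterated-integral and $\pi_1$ framework, at the cost of importing the meridian relation from Collino. Both need $C_q$ to meet $C_b$ and $L$ transversally at the single points $b+q$ and $q'+q$. Your intersection-number argument supplies this, and the paper uses it tacitly when it writes $(j_q)_*\delta_b=\sigma_{C_b}$ and $(j_q)_*\delta_{q'}=\sigma_L$.

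\textbf{Two small slips to fix.}
\begin{itemize}
\item The current identity holds for a logarithmic $1$-form whose differential is smooth, not for a closed one; if $\eta_b$ were closed, $[d\eta_b]$ would be $0$.
\item The displayed coefficient in Step 1 should simply read $[d\eta_b]=-\bigl[2\sum_i x_i'\wedge x_{g+i}'\bigr]=-\varphi(v_0')$, since $\varphi(v_0')=2\sum_i x_i'\cup x_{g+i}'$.
\end{itemize}
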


\begin{proof}
We first compute their sum:
\begin{eqnarray}\label{eq: res_sum}
2\pi\sqrt{-1} \left( \mathrm{Res}_b(\xi_{b,q}) + \mathrm{Res}_{q'}(\xi_{b,q}) \right) 
&=& \int_{\delta_b}\xi_{b,q} + \int_{\delta_{q'}}\xi_{b,q} \nonumber \\
&=& \int_C \sum_{i=1}^g 2 x_i \wedge x_{g+i} = 2g. 
\end{eqnarray}
In the second equallity we used \eqref{eq: xi} and  Stokes' theorem.
Next, we compute their ratio.
We have
\[
\int_{\delta_b} \sum_{i=1}^g 2 x_i x_{g+i} + \alpha_b = 0
\]
as in \eqref{eq:db-alpha-b-vanish}. Hence by \eqref{eq:local-xi-decomp},
\[
2\pi \sqrt{-1} \mathrm{Res}_b(\xi_{b,q}) = \int_{\delta_b} \sum_{i=1}^g 2 x_i x_{g+i} + \xi_{b,q}.
\]
By \eqref{eq: pullback.iter.int} and $(j_q)_*\delta_b = \sigma_{C_b}$ in $\pi_1(V_b, p+q)$, this gives
\begin{equation}\label{eq:res-b}
2\pi \sqrt{-1} \mathrm{Res}_b(\xi_{b,q}) = \int_{\sigma_{C_b}} \sum_{i=1}^g 2 x'_i x'_{g+i} + \eta_b.
\end{equation}
We denote this iterated integral of length $\le 2$ by $\int_{\sigma_{C_b}} I'$.
Similarly,
\begin{equation}\label{eq:res-qprime}
2\pi \sqrt{-1} \, \mathrm{Res}_{q'}(\xi_{b,q}) = \int_{\sigma_L} I'.
\end{equation}
Recall from \S 2 that the Heisenberg relation identifies $\sigma_L$ with the commutator $[a_i,b_i]$ in $\pi_1(V_b,p+q)$. Hence $\sigma_L - 1 \in J^2$, where $J$ denotes the augmentation ideal of $\Z\pi_1(V_b,p+q)$. Hence $(\sigma_L-1)^2 \in J^4 \subset J^3$.
Combining this with the Collino relation \eqref{eq: Collinoeq}, we obtain
\[
\sigma_{C_b} = (1 + (\sigma_L-1))^{g-1} \equiv 1 + (g-1)(\sigma_L-1)\pmod{J^3}.
\]
Therefore,
\[
\sigma_{C_b} - 1 \equiv (g-1)(\sigma_L-1)\pmod{J^3}.
\]
Thus,
\begin{equation}\label{eq:dCb-dL}
\int_{\sigma_{C_b}} I' = (g-1) \int_{\sigma_L} I'.
\end{equation}
By \eqref{eq:res-b}, \eqref{eq:res-qprime}, and \eqref{eq:dCb-dL}, we obtain
\begin{equation}\label{eq: res_rel}
\mathrm{Res}_b(\xi_{b,q}) = (g-1) \mathrm{Res}_{q'}(\xi_{b,q}).
\end{equation}
By \eqref{eq: res_sum} and \eqref{eq: res_rel}, we have \eqref{eq:res-explicit}.
\end{proof}

Lemma \ref{lem:dbdq-I-vanish}, \ref{lem:pairing-RHS-decomp}, and \ref{lem:res-explicit} show that
\begin{equation}\label{eq:pairing-RHS-modJ4}
\left\langle I, \text{RHS of \eqref{eq: modJ^4}}\right\rangle = (2g-2)\int_p^b \omega + 2\int_p^{q'} \omega.
\end{equation}
Comparing \eqref{eq:pairing-LHS-modJ4} with \eqref{eq:pairing-RHS-modJ4}, we obtain \eqref{eq: rec.law}. This completes the proof of Proposition \ref{prop: rec.law}.
\popQED}

\enlargethispage{3\baselineskip}

%%%%%%% Reference %%%%%%%%%%%%%%%%%%%%%%%%%%%%%

\end{document}